\pgfplotsset{compat=1.16}
\newtheorem{thm}{Theorem}
\theoremstyle{definition}
\theoremstyle{remark}
\newtheorem{rem}{Remark}%
\theoremstyle{plain}
\newtheorem{lem}[thm]{Lemma}
\theoremstyle{remark}
\theoremstyle{definition}
\DeclareMathOperator{\E}{\mathbb{E}}
\DeclareMathOperator{\Cov}{Cov}
\DeclareMathOperator*{\argmin}{arg\,min}
\DeclareMathOperator{\tr}{tr}
\newcommand{\mbf}[1]{\mathbf{#1}}
\newcommand{\mbs}[1]{\boldsymbol{#1}}
\newcommand{\what}[1]{\widehat{#1}}
\newcommand{\real}{\mathbb{R}}
\newcommand{\T}{\top}
\newcommand{\0}{\mbf{0}}
\newcommand{\I}{\mbf{I}}
\newcommand{\eye}{\mbf{I}}
\newcommand{\out}{\mbf{y}}
\newcommand{\genp}{\mbf{x}}
\newcommand{\truelinp}{\mbs{\theta_\circ}}
\newcommand{\linp}{\mbs{\theta}}
\newcommand{\covp}{\mbf{C}}
\newcommand{\weight}{\mbf{W}}
\newcommand{\genmat}{\mbf{M}}
\newcommand{\covgenp}{\mbf{D}}
\newcommand{\eps}{\mbs{\varepsilon}}
\newcommand{\covnoise}{\mbf{V}}
\newcommand{\truecovnoise}{\mbf{V_\circ}}
\newcommand{\reg}{\mbs{\Phi}}
\newcommand{\linphat}{\what{\linp}}
\newcommand{\covout}{\mbf{R}}
\newcommand{\diag}{\text{diag}}
\newcommand{\linpset}{\Theta}
\newcommand{\covset}{\mathcal{S}}
\newcommand{\covpset}{\mathcal{C}}
\newcommand{\covgenpset}{\mathcal{D}}
\newcommand{\covnoiseset}{\mathcal{V}}
\newcommand{\range}{\mathcal{R}}
\newcommand{\covpu}{\mbf{U}}
\newcommand{\covpuc}{\mbf{u}}
\newcommand{\covpe}{\mbf{\Lambda}}
\newcommand{\mse}{\textsc{mse}}
\newcommand{\mspe}{\textsc{msd}}
\newcommand{\mad}{\textsc{mad}}
\newcommand{\covreg}{\mbs{\Sigma}}
\newcommand{\blue}{\textsc{Blue}}
\newcommand{\lmmse}{\textsc{Lmmse}}
\newcommand{\ls}{\textsc{Ls}}
\newcommand{\nmse}{\textsc{Nmse}}
\begin{document}

\title{Tuned Regularized Estimators for \\ Linear Regression via Covariance Fitting}
\author{Per Mattsson, Dave Zachariah, Petre Stoica\thanks{This
    work has been partly supported by the Swedish Research Council
    (VR) under contracts 2018-05040 and 2021-05022.}}

\maketitle

\begin{abstract}
We consider the problem of finding tuned regularized parameter estimators for
linear models. We start by showing that three known optimal linear
estimators belong to a wider class of estimators that can be
formulated as a solution to a weighted and constrained minimization
problem. The optimal weights, however, are
typically unknown in many applications. This begs the question, how
should we choose the weights using only the data? We propose using the
covariance fitting SPICE-methodology to obtain data-adaptive
weights and  show that the resulting class of estimators yields tuned versions of known regularized estimators -- such as ridge regression, LASSO, and regularized least absolute deviation. These theoretical results unify several important estimators under a common umbrella. The resulting tuned estimators are also shown to be practically relevant by means of a number of numerical examples.

\end{abstract}

\section{Introduction}

The linear model
\begin{equation}\label{eq:model}
\out = \reg \truelinp + \eps,
\end{equation}
has a wide range of applications in statistics, signal processing and
machine learning. Here $\out$ denotes a column vector consisting of $n$ samples,
$\reg$ is an $n \times d$ matrix of regressors, $\truelinp$ is an unknown
parameter vector and $\eps$ is a vector of zero-mean noise with covariance matrix $\Cov[\eps] = \truecovnoise \succeq \0$.

The least-squares (\ls) method is the standard approach to estimate
$\linp$. However, in applications with few samples $n$, high noise levels,
or heteroscedastic noise, it can suffer from large
errors. Regularized estimators,
such as Ridge regression \cite{hoerl1970ridge}, Lasso
\cite{tibshirani1996regression} and regularized LAD \cite{wang2006regularized}, alleviate these drawbacks but require separate
methods for tuning regularization parameters. In this paper, we are
interested in studying a class of tuned regularized estimation methods. 

We begin by showing how optimal linear estimators are derived in a class of
estimators that is parameterized by positive semi-definite weight
matrices. Subsequently, we show that setting these weight matrices
in a data-adaptive manner using a covariance-fitting criterion leads directly to
tuned versions of several popular estimators, such as square-root Ridge
estimators, square-root LASSO, and regularized LAD.

The covariance-fitting criterion is based on the SPICE-methodology, first
proposed in \cite{stoica2010new}, which we here generalize to include singular
covariance matrices. Our analysis considers a broader classes of
covariance structures than considered in
\cite{babu2014connection,stoica2014weighted,zachariah2015online} and
therefore it extends
and unifies results in the cited references.

\emph{Notation:} $\| \genp \|_2, \| \genp \|_1$ denote the $\ell_2$- and $\ell_1$-norms. $\| \mbf{X} \|_{\weight} = \sqrt{\tr\{ \mbf{X}^\top \weight \mbf{X} \}}$, where $\weight \succeq \0$, denote a weighted (semi)-norm. The Moore-Penrose pseudoinverse of $\mbf{X}$ is denoted $\mbf{X}^\dagger$. $\range(\genmat)$ is the range space of $\genmat$.
\section{Optimal linear estimators}\label{sec:optimal}

We being by considering linear estimators, i.e., estimators of the form
$\linphat = \mbf{K} \out$, where $\mbf{K}$ is a $d \times n$ matrix
that is independent of $\out$. Let the mean-squared error of $\linphat$
be denoted as 
\[
	\mse(\truelinp) = \E\left[ \| \truelinp - \linphat \|_2^2 \right],
\]
where the expectation is taken with respect to the noise $\eps$. We
will now show that three different optimal linear estimators belong to a
unified class of estimators.

\begin{thm}\label{thm:blue}
Consider the following class of estimators,
\begin{equation}\label{eq:blue_opt}
\begin{aligned}
	\linphat(\covnoise) = \argmin_{\linp} \quad& \| \out - \reg\linp \|_{\covnoise^\dagger}^2 \\
	\text{s.t.}\quad & \out - \reg \linp \in \range(\covnoise)
\end{aligned}
\end{equation}
spanned by all $\covnoise \succeq \0$.

If $\reg$ has full column rank, then among all linear \emph{unbiased}
estimators,  the minimum MSE is attained by $\linphat(\alpha
\covnoise_\circ)$ for any $\alpha > 0$ (see Appendix~\ref{app:blue} for an analytical expression). This is also known as the `best linear unbiased
estimator' (\blue) \cite{Rao1973_linear,SoderstromStoica1988_system}.
\end{thm}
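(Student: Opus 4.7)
The plan is to proceed in three steps: reduce to $\alpha=1$ by scale invariance, derive a closed-form linear expression for $\linphat(\covnoise_\circ)$, and then establish optimality by a Gauss--Markov-type argument.

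First, I would note that replacing $\covnoise_\circ$ by $\alpha\covnoise_\circ$ with $\alpha>0$ only rescales the objective, since $(\alpha\covnoise_\circ)^\dagger=\alpha^{-1}\covnoise_\circ^\dagger$, and leaves the constraint $\out-\reg\linp\in\range(\covnoise_\circ)$ unaffected. Hence $\linphat(\alpha\covnoise_\circ)=\linphat(\covnoise_\circ)$, and it suffices to analyze the choice $\alpha=1$.

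Next I would characterize $\linphat(\covnoise_\circ)$ via the KKT conditions of (\ref{eq:blue_opt}). Let $\mbf{P}=\covnoise_\circ\covnoise_\circ^\dagger$ denote the orthogonal projector onto $\range(\covnoise_\circ)$, so the constraint reads $(\I-\mbf{P})(\out-\reg\linp)=\0$. Introducing a Lagrange multiplier supported on $\range(\I-\mbf{P})$ produces a linear stationarity system in $\linp$; since $\reg$ has full column rank this system has a unique solution of the form $\linphat(\covnoise_\circ)=\mbf{K}_\circ\out$, with $\mbf{K}_\circ$ depending only on $\reg$ and $\covnoise_\circ$. When $\covnoise_\circ\succ\0$ it collapses to the classical weighted normal equation $\mbf{K}_\circ=(\reg^\T\covnoise_\circ^{-1}\reg)^{-1}\reg^\T\covnoise_\circ^{-1}$. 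Moreover, since $\eps\in\range(\truecovnoise)$ almost surely, $\linp=\truelinp$ is feasible; combined with the identity $\mbf{K}_\circ\reg=\I$ read off from the KKT system, this yields $\E[\linphat(\covnoise_\circ)]=\truelinp$, i.e., unbiasedness.

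Finally, I would establish the minimum-MSE property. For any competing linear unbiased estimator $\mbf{K}\out$ (so $\mbf{K}\reg=\I$), $\mse(\truelinp)=\tr\{\mbf{K}\truecovnoise\mbf{K}^\T\}$, and the standard Gauss--Markov manipulation using the orthogonality relation forced by the KKT conditions shows this trace is minimized at $\mbf{K}=\mbf{K}_\circ$. The principal technical obstacle is the singular case $\rank(\truecovnoise)<n$, where the textbook derivation of \blue\ that relies on $\truecovnoise^{-1}$ is unavailable. I would handle this by exploiting the range constraint in (\ref{eq:blue_opt}) to restrict the entire argument to $\range(\truecovnoise)$, on which $\covnoise_\circ^\dagger$ acts as a genuine inverse and the Gauss--Markov comparison proceeds as in the nonsingular case.
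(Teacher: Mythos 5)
Your skeleton (KKT characterization of \eqref{eq:blue_opt} followed by a direct Gauss--Markov comparison) departs from the paper at the decisive step: the paper's proof never carries out the Gauss--Markov argument itself. It establishes feasibility, writes the KKT conditions, and then \emph{exhibits} the closed-form solution $\linphat(\covnoise)=\reg^\dagger[\I-\covnoise\genmat(\genmat\covnoise\genmat)^\dagger\genmat]\out$ with $\genmat=\I-\reg\reg^\dagger$, verifying optimality via the pseudo-inverse identity $\mbf{X}\mbf{X}^\top(\mbf{X}\mbf{X}^\top)^\dagger\mbf{X}=\mbf{X}$, and finally identifies this expression with the known \blue{} formula by citation. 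A self-contained optimality proof along your lines is a legitimate (indeed more complete) alternative, and your scale-invariance reduction, the feasibility remark via $\eps\in\range(\truecovnoise)$ a.s., and the unbiasedness identity $\mbf{K}_\circ\reg=\I$ are all fine; the last one follows cleanly by noting that for noise-free data $\out=\reg\linp_0$ the unique minimizer of \eqref{eq:blue_opt} is $\linp_0$ itself.

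The gap is in how you propose to close the Gauss--Markov step in the singular case. The completion-of-squares comparison needs, besides $\mbf{K}_\circ\reg=\I$, the cross-term identity $\tr\{\mbf{D}\truecovnoise\mbf{K}_\circ^\top\}=0$ for every $\mbf{D}$ with $\mbf{D}\reg=\0$, i.e.\ that the columns of $\truecovnoise\mbf{K}_\circ^\top$ lie in $\range(\reg)$ (equivalently $\mbf{K}_\circ\truecovnoise(\I-\reg\reg^\dagger)=\0$). This is not ``read off'' from the stationarity condition, which holds pointwise in $\out$ with an $\out$-dependent multiplier living in $\range(\I-\covnoise^\dagger\covnoise)$; in practice you need the explicit form of $\mbf{K}_\circ$ (essentially the paper's appendix computation) or an equivalent argument to verify it. More seriously, the proposed fix of ``restricting the entire argument to $\range(\truecovnoise)$, on which $\covnoise_\circ^\dagger$ is a genuine inverse'' does not work as stated: unless $\range(\reg)\subseteq\range(\truecovnoise)$ (precisely the condition singled out in Remark~1), the optimal estimator must exploit the noise-free component $(\I-\mbf{P})\out=(\I-\mbf{P})\reg\truelinp$ lying \emph{outside} $\range(\truecovnoise)$ -- that is exactly what the range constraint in \eqref{eq:blue_opt} enforces -- and the unbiasedness constraint couples what a competitor does on $\range(\truecovnoise)$ with what it does on the complement. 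A two-dimensional example makes the failure concrete: with $\reg=(1,1)^\top$ and $\truecovnoise=\diag(1,0)$, the \blue{} is $\what{\theta}=y_2$ with zero MSE, which is not a function of the projection of $\out$ onto $\range(\truecovnoise)$, so no argument confined to that subspace can reach it. The repair is to run the comparison on all of $\real^n$: write $\mbf{K}=\mbf{K}_\circ+\mbf{D}$ with $\mbf{D}\reg=\0$ and show $\tr\{\mbf{K}\truecovnoise\mbf{K}^\top\}=\tr\{\mbf{K}_\circ\truecovnoise\mbf{K}_\circ^\top\}+\tr\{\mbf{D}\truecovnoise\mbf{D}^\top\}$, which requires first establishing the cross-term identity above, most easily from the paper's explicit formula for $\linphat(\covnoise)$.
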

\begin{proof} See Appendix~\ref{app:blue}.\end{proof}

\begin{rem}
	Without the constraint, there would
        be no penalty on parts of the residual $\out - \reg \linp$
        outside the range of $\covnoise$. In the case of $\covnoise =
        \alpha \covnoise_\circ$, the constraint is necessary in order
        to obtain the optimal unbiased estimator in general. Removing the constraint in
        \eqref{eq:blue_opt} yields the optimal estimator if and only if $\covnoise_\circ^\dagger \truecovnoise \reg = \reg$.
      \end{rem}
The optimal unbiased estimator can improve $\mse(\truelinp) $ over
\ls{} in problems with heteroscedastic or correlated noise. But
when $n$ is small, or $\reg$ is ill conditioned, this linear
estimator can still suffer from large errors. To cope with such cases we relax the
unbiasedness requirement and consider more general linear estimators that
minimize $\mse(\truelinp)$.
Specifically, we consider
a class of estimators formed by regularizing the criterion in
\eqref{eq:blue_opt} as follows:
\begin{equation}\label{eq:original_opt}
\linphat(\covp, \covnoise) = \argmin_{\linp \in \linpset(\covp, \covnoise)} \| \out - \reg \linp \|_{\covnoise^\dagger}^2 + \| \linp \|_{\covp^\dagger}^2
\end{equation}
where $\covp$ and $\covnoise$ are positive semi-definite weight
matrices. The parameter vector is restricted to the set
\[
	\linpset(\covp, \covnoise) = \left\{ \linp : \out - \reg \linp \in \range(\covnoise) \text{ and } \linp \in \range(\covp) \right\}.
\]
This constraint ensures that it is not possible to hide parts of the residuals or the parameter vector in a subspace that is not penalized when $\covnoise$ or $\covp$ are singular.

To study the feasibility of the minimization problem in \eqref{eq:original_opt}, we introduce the matrix
\[
	\covout(\covp, \covnoise) \triangleq \reg \covp \reg^\top +
        \covnoise  \; \succeq \0.
\]
When there is no risk of confusion we will drop the arguments and just write $\covout$.
\begin{thm}\label{thm:original_sol}
	If $\out \in \range(\covout)$, then a unique solution to
        \eqref{eq:original_opt} exists and is given by
	\[
          \linphat(\covp, \covnoise) \equiv \covp \reg^\top
                \covout^\dagger \out ,
              \]
        which is linear in $\out$.
	If $\out \notin \range(\covout)$, then $\linpset(\covp,
        \covnoise)$ is empty and thus \eqref{eq:original_opt} is infeasible.
\end{thm}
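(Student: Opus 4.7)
My plan is to decompose the proof into three steps: the infeasibility dichotomy, verification that the proposed $\linphat$ is feasible, and a perturbation argument giving optimality and uniqueness simultaneously.

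First, I would establish the dichotomy via a standard identity for sums of PSD matrices: $\range(A+B)=\range(A)+\range(B)$ when $A,B\succeq\0$. Applied to $\covout=\reg\covp\reg^\top+\covnoise$ together with the factorization identity $\range(\reg\covp\reg^\top)=\range(\reg\covp)$, this gives $\range(\covout)=\range(\reg\covp)+\range(\covnoise)$. If $\out\in\range(\covout)$, a decomposition $\out=\reg\covp\mbf{a}+\covnoise\mbf{b}$ produces the feasible point $\linp=\covp\mbf{a}$; conversely, any feasible $\linp$ certifies $\out\in\range(\covout)$ via $\out=\reg\linp+(\out-\reg\linp)\in\range(\reg\covp)+\range(\covnoise)$. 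This settles the infeasibility claim.

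Second, assuming $\out\in\range(\covout)$, I would check that $\linphat\equiv\covp\reg^\top\covout^\dagger\out$ satisfies both constraints. Membership in $\range(\covp)$ is immediate from its form. For the residual, using $\covout\covout^\dagger\out=\out$ (valid on $\range(\covout)$), I would compute $\out-\reg\linphat=(\covout-\reg\covp\reg^\top)\covout^\dagger\out=\covnoise\covout^\dagger\out\in\range(\covnoise)$.

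Third, for any competing feasible point written as $\linp=\linphat+\delp$, the constraints enforce $\delp\in\range(\covp)$ and $\reg\delp\in\range(\covnoise)$. Expanding the objective about $\linphat$ produces a constant term $f(\linphat)$, a linear term $2\delp^\top(\covp^\dagger\linphat-\reg^\top\covnoise^\dagger(\out-\reg\linphat))$, and a quadratic term $\delp^\top(\covp^\dagger+\reg^\top\covnoise^\dagger\reg)\delp$. The linear term collapses to zero by pushing the symmetric projections $\covp^\dagger\covp$ and $\covnoise\covnoise^\dagger$ onto $\delp$ and $\reg\delp$ respectively (both act as the identity on their images), recognizing both halves of the linear term as $\delp^\top\reg^\top\covout^\dagger\out$. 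For strict positivity of the quadratic term on nonzero $\delp\in\range(\covp)$, the bound $\delp^\top(\covp^\dagger+\reg^\top\covnoise^\dagger\reg)\delp\geq\delp^\top\covp^\dagger\delp>0$ suffices, since $\range(\covp)\cap\text{null}(\covp)=\{\0\}$ rules out any nonzero $\delp$ in $\range(\covp)$ with $\delp^\top\covp^\dagger\delp=0$. Together these give both optimality and uniqueness.

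The main obstacle is the careful bookkeeping of Moore-Penrose pseudoinverses in the singular regime. The cancellation in the linear term hinges on the fact that the feasibility constraints $\delp\in\range(\covp)$ and $\reg\delp\in\range(\covnoise)$ are precisely what let $\covp^\dagger\covp$ and $\covnoise\covnoise^\dagger$ behave as identities; without the constraints in $\linpset(\covp,\covnoise)$ these identities would fail and the candidate formula would no longer be stationary.
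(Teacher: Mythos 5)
Your proposal is correct, and its skeleton (feasibility characterization, verification that $\covp \reg^\top \covout^\dagger \out$ satisfies both range constraints via $\out - \reg\linphat = \covnoise \covout^\dagger \out$, then optimality plus uniqueness) matches the paper's, but the final step is carried out by a genuinely different mechanism. The paper first notes that $\|\linp\|_{\covp^\dagger}^2$ is strictly convex on $\linpset(\covp,\covnoise)$ to get uniqueness, and then proves optimality by writing the range constraints as linear equalities and exhibiting an explicit Lagrange multiplier $\mbs{\lambda} = \bigl[-\covout^\dagger\out;\ \reg^\top\covout^\dagger\out\bigr]$ that satisfies the stationarity condition. You instead avoid multipliers altogether: you expand the quadratic objective exactly around the candidate, show the linear term vanishes because $\covp^\dagger\covp$ and $\covnoise\covnoise^\dagger$ act as identities on $\range(\covp)$ and $\range(\covnoise)$ (which is exactly where the constraints place $\delp$ and $\reg\delp$), and observe that the quadratic term $\delp^\top(\covp^\dagger + \reg^\top\covnoise^\dagger\reg)\delp \geq \delp^\top\covp^\dagger\delp$ is strictly positive for nonzero $\delp \in \range(\covp)$, which delivers optimality and uniqueness in one stroke. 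Your range-sum identity $\range(\covout) = \range(\reg\covp) + \range(\covnoise)$ for the feasibility dichotomy is also a slightly more structural variant of the paper's direct inclusion arguments. What each buys: your perturbation argument is more elementary and self-contained, and it makes transparent exactly why the range constraints are needed for stationarity of the closed-form solution; the paper's KKT-style verification reuses the same machinery as its proof of Theorem~\ref{thm:blue}, keeping the two appendices parallel. One small point to tighten: from $\delp^\top\covp^\dagger\delp = 0$ you should pass explicitly through $\delp \in \text{null}(\covp^\dagger) = \text{null}(\covp)$ (valid since $\covp \succeq \0$ is symmetric) before invoking $\range(\covp) \cap \text{null}(\covp) = \{\0\}$; as written the step is correct but compressed.
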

\begin{proof} See Appendix~\ref{app:original_sol}.\end{proof}

\begin{thm}\label{thm:optimal_cov}
	Among all linear estimators, the minimum MSE is attained by
        $\linphat(\covp, \covnoise)$ with weight matrices
	\[
		 \covp = \alpha \truelinp\truelinp^\top \quad
                 \text{and} \quad \covnoise = \alpha \covnoise_{\circ},
	\]
	for any $\alpha > 0$. 
\end{thm}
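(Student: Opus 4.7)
The plan is to parametrize an arbitrary linear estimator as $\linphat = \mbf{K}\out$ for some $d \times n$ matrix $\mbf{K}$, minimize the MSE directly over $\mbf{K}$, and then recognize the resulting optimum as an instance of the form produced by Theorem~\ref{thm:original_sol}. Substituting $\out = \reg\truelinp + \eps$ with $\E[\eps] = \0$ and $\Cov[\eps] = \truecovnoise$, the bias--variance split yields
\[
    \mse(\truelinp) = \bigl\|(\I - \mbf{K}\reg)\truelinp\bigr\|_2^2 + \tr\bigl(\mbf{K}\truecovnoise\mbf{K}^\top\bigr),
\]
which is a convex quadratic in the entries of $\mbf{K}$.

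Next, I would differentiate this objective and set the matrix derivative to zero, which produces the normal equation
\[
    \mbf{K}\bigl(\reg\truelinp\truelinp^\top\reg^\top + \truecovnoise\bigr) = \truelinp\truelinp^\top\reg^\top.
\]
Setting $\covp \triangleq \truelinp\truelinp^\top$ and $\covnoise \triangleq \truecovnoise$, the bracketed factor is precisely $\covout(\covp,\covnoise)$ as defined before Theorem~\ref{thm:original_sol} and the right-hand side equals $\covp\reg^\top$. Since $\out$ almost surely lies in $\range(\covout)$, every solution $\mbf{K}$ of this system acts on $\out$ identically to $\mbf{K}_\star \triangleq \covp\reg^\top\covout^\dagger$, and thus $\mbf{K}\out = \linphat(\covp,\covnoise)$ as given in Theorem~\ref{thm:original_sol}. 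Feasibility is automatic because $\linphat(\covp,\covnoise)$ is a scalar multiple of $\truelinp$ and hence lies in $\range(\covp)$, while the residual constraint in $\linpset(\covp,\covnoise)$ follows from the range properties of $\covout^\dagger$.

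Finally, I would verify the scaling claim. Since $(\alpha\mbf{A})^\dagger = \alpha^{-1}\mbf{A}^\dagger$ for $\alpha>0$, the product $\covp\reg^\top\covout^\dagger$ is unchanged under $(\covp,\covnoise) \mapsto (\alpha\covp,\alpha\covnoise)$, so $\linphat(\alpha\covp,\alpha\covnoise) = \linphat(\covp,\covnoise)$. The main technical obstacle is that $\covp = \truelinp\truelinp^\top$ has rank one and $\truecovnoise$ may also be singular, which leaves the normal equation underdetermined and the MSE merely convex (not strictly convex) in $\mbf{K}$. Appealing to Theorem~\ref{thm:original_sol} resolves this by singling out one particular minimizer whose \emph{action} on $\out$ coincides with every other minimizer on $\range(\covout)$, and therefore achieves the same minimum MSE.
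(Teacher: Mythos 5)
Your proposal is essentially correct, but it takes a genuinely different route from the paper. The paper avoids matrix calculus entirely: writing $\mbf{X}(\genmat)$ for the error (co)variance matrix of an arbitrary linear estimator $\genmat\out$, it completes the square, showing that $\mbf{X}(\genmat)-\mbf{X}(\genmat^*)=(\genmat-\covp\reg^\top\covout^\dagger)\,\covout\,(\genmat-\covp\reg^\top\covout^\dagger)^\top\succeq\0$ with $\genmat^*=\covp\reg^\top\covout^\dagger$, so optimality of $\genmat^*$ is immediate for \emph{every} $\genmat$, with no appeal to convexity, to solvability of a stationarity condition, or to probabilistic statements about $\out$; the same computation with $\covp=\covp_\circ$ proves Theorem~\ref{thm:lmmse} simultaneously. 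Your route -- bias--variance split, normal equation $\mbf{K}\covout=\covp\reg^\top$, convexity of the quadratic in $\mbf{K}$ -- gives a more familiar ``set the derivative to zero'' derivation, at the price of having to discuss the solution set of an underdetermined linear system and to argue that $\out\in\range(\covout)$ almost surely (which is true, since $\reg\truelinp\in\range(\reg\covp)\subseteq\range(\covout)$ and $\eps\in\range(\truecovnoise)\subseteq\range(\covout)$ a.s., but deserves a sentence rather than an assertion).

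One step must be made explicit, because your argument hinges on it: you never verify that $\mbf{K}_\star=\covp\reg^\top\covout^\dagger$ actually satisfies the normal equation, nor, therefore, that the normal equation is solvable at all. Without this, neither the claim that ``every solution acts on $\out$ identically to $\mbf{K}_\star$'' nor the attainment of the minimum by $\linphat(\covp,\covnoise)$ follows. The required fact is $\covp\reg^\top\covout^\dagger\covout=\covp\reg^\top$, equivalently $\covout\covout^\dagger\reg\covp=\reg\covp$, which holds because $\range(\reg\covp)\subseteq\range(\covout)$ and $\covout\covout^\dagger$ is the orthogonal projector onto $\range(\covout)$ -- exactly the identity the paper invokes when expanding its completed square. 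With that line added your proof is complete (your separate feasibility remark about $\linphat\in\range(\covp)$ is redundant, since Theorem~\ref{thm:original_sol} already supplies feasibility and uniqueness once $\out\in\range(\covout)$), and the scaling argument for $\alpha>0$ via $(\alpha\covout)^\dagger=\alpha^{-1}\covout^\dagger$ matches the paper's observation.
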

\begin{proof} See Appendix~\ref{app:optimal}. \end{proof}
\begin{rem}
It follows that the class of estimators \eqref{eq:original_opt}
includes the optimal linear estimator, but it is unrealizable since it
depends on $\truelinp$ and $\truecovnoise$, which are typically
unknown. Also, by setting $\covp = c \I$,  the estimator $\linphat(\covp,
\covnoise)$ also includes $\linphat(\covnoise)$ in \eqref{eq:blue_opt} when $c \rightarrow \infty$.
\end{rem}

Given the practical unrealizability of the optimal linear estimator, a common model-based approach is to consider a prior distribution over
$\truelinp$, with mean and covariance
\[
	\E[\truelinp] = \0, \quad \text{and} \quad \Cov[\truelinp] = \covp_\circ \succeq \0
      \]
Then $\E[ \mse(\truelinp)]$ will denote the mean squared error
\emph{marginalized} over all plausible $\truelinp$. The parameter
$\truelinp$ is here drawn independently from the measurement noise
$\eps$ in \eqref{eq:model}.
      
      \begin{rem}
        The zero-mean assumption does not incur any loss of generality, since
        any non-zero mean can be removed from the data $\out$.
      \end{rem}

\begin{thm}\label{thm:lmmse}
	Among all linear estimators, the minimum \emph{marginalized} MSE is attained by
        $\linphat(\covp, \covnoise)$ with weight matrices
	\[
		 \covp = \alpha \covp_\circ  \quad
                 \text{and} \quad \covnoise = \alpha \covnoise_{\circ},
	\]
	for any $\alpha > 0$. This is also known as the `linear minimum mean-square
estimator' (\lmmse{}) \cite{Kay1993_sspestimation,kailath2000linear}.

\end{thm}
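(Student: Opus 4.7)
The plan is to minimize the marginalized MSE directly over the linear gain matrix $\mbf{K}$ defining $\linphat = \mbf{K}\out$, and then identify the resulting optimum with $\linphat(\alpha\covp_\circ,\alpha\covnoise_\circ)$ by invoking Theorem~\ref{thm:original_sol}. Substituting $\out = \reg\truelinp + \eps$ and using the independence and zero-mean of $\truelinp$ and $\eps$, I get
\[
\E[\mse(\truelinp)] = \tr\bigl\{(\I - \mbf{K}\reg)\covp_\circ(\I - \mbf{K}\reg)^\top\bigr\} + \tr\{\mbf{K}\covnoise_\circ\mbf{K}^\top\},
\]
which is convex and quadratic in $\mbf{K}$. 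Setting the matrix gradient to zero produces the normal equations $\mbf{K}\covout_\circ = \covp_\circ\reg^\top$, where $\covout_\circ \triangleq \reg\covp_\circ\reg^\top + \covnoise_\circ$.

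I then verify that $\mbf{K}^\star \triangleq \covp_\circ\reg^\top\covout_\circ^\dagger$ solves these equations. Writing $\covp_\circ = \covp_\circ^{1/2}\covp_\circ^{1/2}$ gives $\reg\covp_\circ = \reg\covp_\circ^{1/2}\covp_\circ^{1/2}$ and $\reg\covp_\circ\reg^\top = (\reg\covp_\circ^{1/2})(\reg\covp_\circ^{1/2})^\top$, so $\range(\reg\covp_\circ) \subseteq \range(\reg\covp_\circ\reg^\top)$. The PSD-sum property $\range(A)\subseteq\range(A+B)$ for $A,B\succeq\0$ then yields $\range(\reg\covp_\circ)\subseteq\range(\covout_\circ)$. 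Consequently the orthogonal projector $\covout_\circ^\dagger\covout_\circ$ (symmetric because $\covout_\circ$ is) acts as identity on the columns of $\reg\covp_\circ$, giving $\covp_\circ\reg^\top\covout_\circ^\dagger\covout_\circ = \covp_\circ\reg^\top$, i.e.\ $\mbf{K}^\star\covout_\circ = \covp_\circ\reg^\top$ as required.

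Finally, Theorem~\ref{thm:original_sol} applied with $\covp=\alpha\covp_\circ$ and $\covnoise=\alpha\covnoise_\circ$ gives
\[
\linphat(\alpha\covp_\circ,\alpha\covnoise_\circ) = \alpha\covp_\circ\reg^\top(\alpha\covout_\circ)^\dagger\out = \covp_\circ\reg^\top\covout_\circ^\dagger\out = \mbf{K}^\star\out
\]
for every $\alpha>0$, using the identity $(\alpha\covout_\circ)^\dagger = \alpha^{-1}\covout_\circ^\dagger$. The main obstacle is the singular-covariance case: the range-inclusion argument above is the delicate step, and one must separately check that $\out \in \range(\covout_\circ)$ almost surely so that Theorem~\ref{thm:original_sol} is applicable. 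This holds because $\eps \in \range(\covnoise_\circ)$ and $\reg\truelinp \in \range(\reg\covp_\circ\reg^\top)$ almost surely under the stated priors, and both spaces are contained in $\range(\covout_\circ)$.
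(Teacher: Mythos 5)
Your proposal is correct and follows essentially the same route as the paper: both reduce the marginalized MSE to the same quadratic in the linear gain matrix, identify the optimal gain $\covp_\circ\reg^\top\covout_\circ^\dagger$ (with $\covout_\circ = \reg\covp_\circ\reg^\top + \covnoise_\circ$) using the range inclusion $\range(\reg\covp_\circ)\subseteq\range(\covout_\circ)$, and then recognize this as $\linphat(\alpha\covp_\circ,\alpha\covnoise_\circ)$ via Theorem~\ref{thm:original_sol} and the scale invariance in $\alpha$. The only differences are minor: the paper certifies optimality by an explicit completion of squares, showing $\mbf{X}(\genmat)-\mbf{X}(\genmat^\ast)\succeq\0$, rather than by your convexity/normal-equations argument, and your almost-sure check that $\out\in\range(\covout_\circ)$ is a careful detail the paper leaves implicit.
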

\begin{proof}
See Appendix~\ref{app:optimal}.
\end{proof}

In summary, we see that $\linphat(\covp, \covnoise) $ encompasses
three different optimal linear estimators depending on the choice of
weight matrices $\covp$ and $\covnoise$. While the minimizer of the
MSE is unrealizable since it depends on $\truelinp$, the minimizer
of the marginal MSE instead defers the problem to the appropriate specification
of $\covp_\circ$.

In the following section, we will no longer restrict the discussion to  
the class of linear estimators and instead consider $\linphat(\covp, \covnoise)$  when the weight
matrices $\covp$ and $\covnoise$ depend on the data $\out$ and $\reg$.

\section{Data-dependent weight matrices}\label{sec:est_weights}
In the model-based approach, the weight matrices $\covp$ and
$\covnoise$ in \eqref{eq:original_opt} can be viewed as the covariance
matrices for $\linp$ and $\eps$, respectively. Consequently,
$\covout(\covp, \covnoise)$ is the (marginal) covariance matrix of $\out$. 

A possible way to fit $\covp$ and $\covnoise$ to the data is to use the criterion
\begin{equation}
	 \begin{aligned}
		 (\covp^*, \covnoise^*) =  \argmin_{(\covp, \covnoise )  \in \mathcal{S}}\quad &\left \| \out \out^\top  - \covout \right\|^2_{\covout^\dagger}\\
		 \text{s.t.} \quad & \out \in \range(\covout),
  \end{aligned}
  \label{eq:spice} 
\end{equation}
where the constraint $\out \in \range(\covout)$ ensures that the resulting $\covout^* = \covout(\covp^*, \covnoise^*)$ is indeed a valid covariance matrix for $\out$ even if $\covout^*$ is singular. 
The set
\begin{equation}
	\covset = \{ (\covp, \covnoise) : \covp \in \covpset,
        \covnoise \in \covnoiseset \}
 \label{eq:covstructure}
\end{equation}
determines the types of covariance matrices under consideration. We
assume that both $\covpset$ and $\covnoiseset$ include positive
definite matrices. This ensures that for any measurement $\out$ there exist $(\covp, \covnoise) \in \covset$ such that $\out \in \range(\covout)$.

The
fitting criterion \eqref{eq:spice} generalizes the criterion proposed in
\cite{stoica2010new} to handle potentially singular $\covout$ and we
will also extend the analysis in
\cite{babu2014connection,stoica2014weighted,zachariah2015online} to
consider the cases when the weight matrices are either
\begin{itemize}
\item of the form $\kappa \eye$ with $\kappa > 0$,
\item or diagonally structured positive semi-definite matrices,
\item or unstructured positive semi-definite matrices.
\end{itemize}

We will study the resulting estimator $\linphat(\covp^*,
\covnoise^*)$ in~\eqref{eq:original_opt}, using a fitted $(\covp^*,
\covnoise^*)$ from \eqref{eq:spice} . While Theorem~\ref{thm:original_sol} and the constraint in \eqref{eq:spice} ensures that
$\linphat(\covp^*, \covnoise^*)$ exist and is unique for any given $(\covp^*, \covnoise^*)$, 
 there may be multiple solutions to
\eqref{eq:spice} in general. Therefore we define the set of estimates:
\begin{equation}\label{eq:linpset}
\linpset^* \triangleq \left \{  \linphat(\covp^*, \covnoise^*) \: : \: (\covp^*,
  \covnoise^* ) \text{ in \eqref{eq:spice}} \right\}
\end{equation}
The main results of this paper are to characterize the solution set
$\linpset^*$ as tuned versions of several known regularized
estimators thus unifying them under the same umbrella.
\section{Main results}\label{sec:results}
In this section, we show that the estimators with data-dependent weight
matrices in \eqref{eq:linpset} correspond to several known tuned regularized estimators. The derivations are deferred to Section~\ref{sec:proof}. For notational simplicity, we let $\covreg = \frac{1}{n} \reg^\top
\reg \succeq \0$ denote the sample covariance matrix of the regressor
vectors and also let
\[
	\mspe(\linp) = \frac{1}{n} \| \out - \reg \linp\|_2^2 \quad
        \text{and} \quad \mad(\linp) = \frac{1}{n} \| \out - \reg \linp\|_1
\]
denote the (empirical) mean squared/absolute deviation. Recall that the least-squares and least
absolute deviation estimators are the minimizers of $\mspe(\linp)$ and
$\mad(\linp)$, respectively.

\subsection{Diagonally structured weight matrices}

In this section, we consider cases when both weight matrices $\covp$
and $\covnoise$ have diagonal structures. We will show that
$\linphat(\covp^*, \covnoise^*)$ in \eqref{eq:linpset} above is then a
minimizer of one of the following criteria:
\begin{align}
\sqrt{\mspe(\linp)} &+ \lambda \|
\linp\|_2 \quad & \text{(L2-L2)} \label{eq:l2-l2}
 \\
\mad(\linp)  &+ \lambda  \|
\linp\|_2 \quad & \text{(L1-L2)} \label{eq:l1-l2}
\\
\sqrt{\mspe(\linp)}  &+ \lambda  \|\sqrt{\eye
          \odot \covreg} \linp\|_1 \quad &
                                           \text{(L2-WL1)}  \label{eq:l2-wl1}\\ 
\mad(\linp)  &+ \lambda  \|\sqrt{\eye
	\odot \covreg} \linp\|_1 \quad & \text{(L1-WL1)}
\label{eq:l1-wl1}
\end{align}
with a \emph{tuned} regularization parameter $\lambda$.

We note that the minimizers of the above criteria correspond to the following popular regularized estimators:
square-root ridge regression for
\eqref{eq:l2-l2} \cite{gruber2017improving}; $\ell_2$-penalized least absolute deviation for
\eqref{eq:l1-l2}; square-root LASSO for \eqref{eq:l2-wl1} \cite{BelloniEtAl2011_squarerootlasso}; and $\ell_1$-penalized least absolute deviation for
\eqref{eq:l1-l2} \cite{wang2013l1}. A few properties of these estimators are worth mentioning:
\begin{enumerate}
\item  Criteria using $\mad(\linp)$ (i.e., \eqref{eq:l1-l2} and
\eqref{eq:l1-wl1}) are known to be better suited for
problems with noise outliers than those based on $\mspe(\linp)$
(i.e., \eqref{eq:l2-l2} and
\eqref{eq:l2-wl1}).
\item The weighted regularization term in
\eqref{eq:l2-wl1} and \eqref{eq:l1-wl1} corresponds to standardizing
the regressor variables by their (empirical) standard
deviations. This $\ell_1$-regularization term is suited for problems with sparse parameter vectors.
\item The minimizer for a criterion containing the square-root fitting term
  $\sqrt{\mspe(\linp)} $ is also the minimizer of a criterion with
  $\mspe(\linp)$, but with a different $\lambda$. That is, for \eqref{eq:l2-l2} there is a corresponding ridge regression criterion, and for \eqref{eq:l2-wl1} there is a corresponding LASSO-criterion.
 \end{enumerate}

Our main result is that $\linphat(\covp^*, \covnoise^*)$ in
\eqref{eq:linpset} yields tuned regularized estimators according to
Table~\ref{tab:estimators}: The structure of $\covnoise$ determines
the data-fitting term of the criterion, while the structure of $\covp$ determines the
regularization term and the parameter $\lambda$. Choosing between 
nonuniform and uniform diagonal structures of $\covnoise$ thus depends on whether
the measurement $\out$ is subject to noise outliers or not, while
choosing between nonuniform and uniform diagonal structures of
$\covp$ depends on whether the unknown $\truelinp$  is sparse or
not.  This result unifies and extends the connections between the covariance
fitting and regularized estimation developed in
\cite{babu2014connection,stoica2014weighted,zachariah2015online}.

\begin{table}[t]
  \centering
  \caption{Estimator $\linphat(\covp^*, \covnoise^*)$ with diagonally
    structured weight matrices  in
    \eqref{eq:linpset} 
    minimizes four different criteria.}
\begin{tabular}{c|cc}
                        & $\covp = c \I$ & $\covp = \diag(\mathbf{c})$ \\ \hline
  $\covnoise = c \I$ & \eqref{eq:l2-l2}, $\lambda = \sqrt{\frac{\tr\{ \covreg \}
                       }{n}}$           & \eqref{eq:l2-wl1}, $\lambda = \frac{1}{\sqrt{n}}$ \\
  $\covnoise = \diag(\mathbf{v})$ & \eqref{eq:l1-l2}, $\lambda = \sqrt{\frac{\tr\{ \covreg \} }{n}}$
                                         & \eqref{eq:l1-wl1}, $\lambda = \frac{1}{\sqrt{n}}$ \\
\end{tabular}
\label{tab:estimators}
\end{table}

\subsection{Unstructured weight matrices}

The derivations of the results above, to be presented in
Section~\ref{sec:proof},  also cover the case of unstructured weight matrices.

The cases with unstructured $\covnoise$ can readily be dismissed as
uninteresting:  When this matrix can be any positive semi-definite
matrix, then the output in \eqref{eq:spice} can be explained
completely by the noise, setting $\covnoise^* = \out \out^\top$ and
$\covp^* = \0$. This gives $\linphat(\covp^*, \covnoise^*) = \0$,
an uninteresting estimator.

Let us therefore consider cases when $\covp$ is unstructured:
\begin{itemize}
	\item If $\covnoise = v\eye$, then 
		\begin{align*}
			\linphat(\covp^*, \covnoise^*) &\in \argmin_{\linp} \sqrt{\mspe(\linp)} + \frac{1}{\sqrt{n}} \| \linp\|_{\covreg} \\
						       &= (1-q) \argmin_{\linp} \mspe(\linp)
		\end{align*}
		where 
\[
	q = \begin{cases} \frac{1}{\sqrt{n-1}} \frac{ \| \out \|_{\eye - \reg \reg^\dagger}}{\| \out \|_{\reg\reg^\dagger}} & \text{if } \frac{1}{\sqrt{n-1}} \frac{\|\out\|_{\eye-\reg\reg^\dagger}}{\|\out\|_{\reg\reg^\dagger}} < 1 \\ 1 & \text{otherwise} \end{cases}
\]
	\item If $\covnoise = \diag(\mbf{v})$, then
		\[
		\linphat(\covp^*, \covnoise^*) \in \argmin_{\linp} \mad(\linp) + \frac{1}{\sqrt{n}} \| \linp\|_{\covreg}.
		\]
\end{itemize}
It can be noted that setting $\covnoise = v\eye$ yields 
the posterior mean of $\truelinp$ using a g-prior and a Gaussian data model
\cite{zellner1986assessing}. The parameter $q$ can intuitively be seen as
an estimate of an inverse signal-to-noise ratio, so the criterion shrinks the least squares solution towards zero if the estimated signal-to-noise ratio is low. 

While these cases are of theoretical interest, their practical
relevance is limited since the resulting estimators usually are not sufficiently regularized.
For example, if there
exist $\linp$ such that $\out = \reg \linp$, then all
$\linphat(\covp^*, \covnoise^*)$ will be ordinary least squares
solutions, as shown in Appendix~\ref{app:general}. Since this
typically happens when $n<d$, the method offers no regularization in
this important scenario.

For these reasons we believe that unstructured weight matrices have less practical importance.

\section{Numerical experiments}\label{sec:num}
In this section we will evaluate the tuned methods in three
different settings where regularization can improve over the standard
\ls{} method. In each setting we use a fixed $\reg$ with elements drawn from an i.i.d. zero mean
Gaussian distribution, and generate $\out$ as
\[
	\out = \reg \truelinp + \eps, \quad \truelinp \sim \mathcal{N}(\0, \covp_\circ), \quad \eps \sim \mathcal{N}(\0, \covnoise_\circ), 
\]
or equivalently $\out \sim \mathcal{N}(\0, \covout(\covp_\circ,
\covnoise_\circ))$. The following three cases will be considered:
\begin{enumerate}
	\item $\covnoise_\circ = v \eye$ and $\covp_\circ = \eye$.
	\item $\covnoise_\circ = v \eye$ and $\covp_\circ$ is a diagonal matrix with only 10 non-zero elements. This means that $\truelinp$ is sparse, with only 10 non-zero elements.
	\item $\covp_\circ$ is diagonal with only 10 non-zero elements. $\covnoise_{\circ}$ is first set equal to $v \eye$. Then two elements are changed to 500. This means that $\truelinp$ is sparse, and there are two outliers in the data.
        \end{enumerate}
In all cases $n=d=100$ and $v$ is chosen so that the signal-to-noise ratio is
\begin{equation*}
\text{SNR} = \frac{\tr\{ \reg \covp_\circ \reg^\T \} }{\tr\{
  \covnoise_\circ \}} = 10
\end{equation*}
Now consider an estimator $\linphat_{\lambda}$ which
minimizes any given regularized criterion
\eqref{eq:l2-l2}-\eqref{eq:l1-wl1} with a parameter $\lambda$. 
We evaluate its performance using the marginalized and normalized mean square error,
\[
	\nmse(\lambda) = \frac{\E[ \| \truelinp -
            \linphat_\lambda \|_2^2 ]}{\tr(\covp_\circ)},
      \]
that is approximated using 1000 Monte-Carlo simulations. Note that
$\nmse(0)$ is the performance of an unregularized estimator and as
$\lambda \rightarrow \infty$ we have that
$\nmse(\lambda) \rightarrow 1$ since $\linphat_\lambda \rightarrow
\0$. We also show the \nmse{} of the oracle estimator
$\linphat(\covp_\circ,  \covnoise_\circ)$, which is a lower bound on the error.

Figure~\ref{fig:numericalnmse} displays $\nmse(\lambda)$ as a
function of $\lambda$ using the four regularized estimators
in the three cases above. In each case we show the lower bound as well
as the \emph{tuned} $\lambda$ that follows from using the fitted
weight matrices (see Table~\ref{tab:estimators}). Note that in all
cases, regularization can reduce the error below $\nmse(0)$.

\begin{figure*}[ht!]
	\centering
	\begin{adjustbox}{width=\textwidth}
		\begin{tikzpicture}
	\begin{groupplot}[
		group style={
			group size=4 by 3,
			vertical sep=.5cm
		},
		width=0.9\columnwidth,
		height=5cm,
		xmin =0,
		xmax=1.5,
		ymin=0,
		ymax=1.5,
		restrict y to domain = 0:7000
		]
		\nextgroupplot[title={L2-L2}, ylabel={Case 1, NMSE}, xticklabel={\empty}]
			\addplot+[no marks, thick] plot table[col sep=comma] {figures/l2l2full.csv};
			\addplot[no marks, thick, orange] coordinates {(0, 0.263) (1.7, 0.263)};
			\addplot[mark=*, red] coordinates {(1.0), (0.432)};

		\nextgroupplot[title={L1-L2}, yticklabels={\empty}, xticklabel={\empty}]
			\addplot+[no marks, thick] plot table[col sep=comma] {figures/l1l2full.csv};
			\addplot[no marks, thick, orange] coordinates {(0, 0.276) (1.7, 0.263)};
			\addplot[mark=*, red] coordinates {(1.0), (0.644)};

		\nextgroupplot[title={L2-WL1}, xmax=0.8, yticklabels={\empty}, xticklabel={\empty}]
			\addplot+[no marks, thick] plot table[col sep=comma] {figures/l2l1full.csv};
			\addplot[no marks, thick, orange] coordinates {(0, 0.276) (1.7, 0.263)};
			\addplot[mark=*, red] coordinates {(0.1), (0.4199)};

		\nextgroupplot[title={L1-WL1}, xmax=0.8, yticklabels={\empty}, xticklabel={\empty}]
			\addplot+[no marks, thick] plot table[col sep=comma] {figures/l1l1full.csv};
			\addplot[no marks, thick, orange] coordinates {(0, 0.276) (1.7, 0.263)};
			\addplot[mark=*, red] coordinates {(0.1), (0.581)};

		\nextgroupplot[ylabel={Case 2, NMSE}, xticklabel={\empty}]
			\addplot+[no marks, thick] plot table[col sep=comma] {figures/l2l2sparse.csv};
			\addplot[no marks, thick, orange] coordinates {(0, 0.011) (1.7, 0.011)};
			\addplot[mark=*, red] coordinates {(1.0), (0.448)};

		\nextgroupplot[yticklabels={\empty}, xticklabel={\empty}]
			\addplot+[no marks, thick] plot table[col sep=comma] {figures/l1l2sparse.csv};
			\addplot[no marks, thick, orange] coordinates {(0, 0.011) (1.7, 0.011)};
			\addplot[mark=*, red] coordinates {(1.0), (0.668)};

		\nextgroupplot[yticklabels={\empty}, xmax=0.8, xticklabel={\empty}]
			\addplot+[no marks, thick] plot table[col sep=comma] {figures/l2l1sparse.csv};
			\addplot[no marks, thick, orange] coordinates {(0, 0.011) (1.7, 0.011)};
			\addplot[mark=*, red] coordinates {(0.1), (0.0446)};

		\nextgroupplot[yticklabels={\empty}, xmax=0.8, xticklabel={\empty}]
			\addplot+[no marks, thick] plot table[col sep=comma] {figures/l1l1sparse.csv};		
			\addplot[no marks, thick, orange] coordinates {(0, 0.011) (1.7, 0.011)};
			\addplot[mark=*, red] coordinates {(0.1), (0.0651)};

		\nextgroupplot[ylabel={Case 3, NMSE}, xlabel={$\lambda$}]
			\addplot+[no marks, thick] plot table[col sep=comma] {figures/l2l2sparse_outlier.csv};
			\addplot[no marks, thick, orange] coordinates {(0, 0.012) (1.7, 0.012)};
			\addplot[mark=*, red] coordinates {(1.0), (0.657)};

		\nextgroupplot[yticklabels={\empty}, xlabel={$\lambda$}]
			\addplot+[no marks, thick] plot table[col sep=comma] {figures/l1l2sparse_outlier.csv};
			\addplot[no marks, thick, orange] coordinates {(0, 0.012) (1.7, 0.012)};
			\addplot[mark=*, red] coordinates {(1.0), (0.659)};

		\nextgroupplot[yticklabels={\empty}, xmax=0.8, xlabel={$\lambda$}]
			\addplot+[no marks, thick] plot table[col sep=comma] {figures/l2l1sparse_outlier.csv};
			\addplot[no marks, thick, orange] coordinates {(0, 0.012) (1.7, 0.012)};
			\addplot[mark=*, red] coordinates {(0.1), (0.3634)};

		\nextgroupplot[yticklabels={\empty}, xmax=0.8, xlabel={$\lambda$}]
			\addplot+[no marks, thick] plot table[col sep=comma] {figures/l1l1sparse_outlier.csv};		
			\addplot[no marks, thick, orange] coordinates {(0, 0.012) (1.7, 0.012)};
			\addplot[mark=*, red] coordinates {(0.1), (0.065)};

	\end{groupplot}
	\end{tikzpicture}
\end{adjustbox}
\caption{The normalized mean-squared error $\nmse(\lambda)$ as a
  function of the regularization parameter $\lambda$ (blue curves) in
  three different cases. The
  horizontal lines indicate the lower bound set by an oracle estimator. We consider four
different estimators \eqref{eq:l2-l2}-\eqref{eq:l1-wl1} that are
obtained using the data-adaptive weight matrices in
\eqref{eq:original_opt}, see Table~\ref{tab:estimators} and the
corresponding red dots on the curves.}
\label{fig:numericalnmse}
\end{figure*}
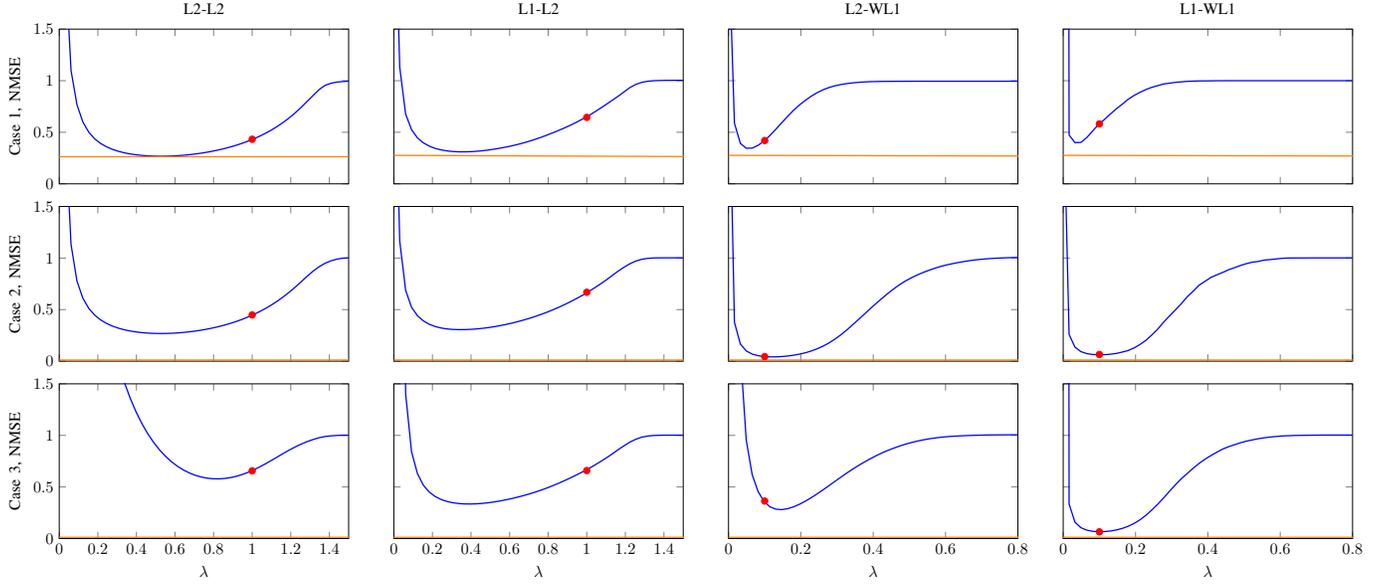

Case 1) with uniform noise power and a dense parameter vector:  We see
that the lower bound can be attained by an L2-L2 estimator (as
expected). We also see that using a nonuniform diagonal matrix
$\covnoise$ leads to slightly worse performance, while assuming a nonuniform $\covp$ does not hurt the performance in a noticeable way.

Case 2) with uniform noise power and a sparse parameter vector: Here
we see that using a nonuniform diagonal matrix
$\covp$ clearly outperforms the alternative, and the tuned L2-WL1 is close to
the lower bound. Again, using a nonuniform diagonal $\covnoise$ gives slightly worse performance.

Case 3) with nonuniform noise power and a sparse parameter vector:
When a nonuniform diagonal matrix $\covnoise$ is used, the performance is about the same as in Case 2 with no outliers. However, when a uniform diagonal matrix $\covnoise$ is used, the outliers impair the performance of the resulting estimators. 

In summary, when we let $\covpset$ and $\covnoiseset$ in
\eqref{eq:spice} have the same structure as the true covariance
matrices of the data generating process, then the corresponding
regularized minimization problems in
\eqref{eq:l2-l2}-\eqref{eq:l1-wl1} can be tuned to yield estimators
whose performance is quite close to the optimal oracle estimator. In
such cases, furthermore, we observe that corresponding tuned versions in
Table~\ref{tab:estimators} are close to the optimal tuning. Finally,
assuming a more general structure for $\covpset$ and $\covnoiseset$
than necessary does not hurt the performance much. These observations
suggest using a nonuniform diagonal structures for both $\covpset$ and
$\covnoiseset$ -- i.e., the L1-WL1 estimator -- if there is no prior knowledge about the data generating process.
\section{Derivation}\label{sec:proof} 
In this section we will show that the tuned regularized estimators presented in Section~\ref{sec:results} indeed give the estimates $\linphat(\covp^*, \covnoise^*)$ in \eqref{eq:linpset}.

Define the cost function \begin{equation}\label{eq:J} \begin{aligned}
	J(\linp; \covp, \covnoise) &\triangleq \|\out - \reg \linp\|_{\covnoise^\dagger}^2 + \| \linp\|_{\covp^\dagger}^2 + \frac{1}{\|\out\|_2^2} \tr\left\{ \covout\right\} \\
				   &= f(\out-\reg\linp, \covnoise, \eye) + f(\linp, \covp, \reg^\top \reg)
\end{aligned}
\end{equation}
where
\[
	f(\genp, \covgenp, \weight) \triangleq \| \genp \|_{\covgenp^\dagger}^2 + \frac{1}{\|\out\|_2^2} \tr\left\{ \weight \covgenp\right\}.
\]
Note that this is the criterion in \eqref{eq:original_opt} with a weighted trace of $\covout$ added. To see the connection between $J$ and the criterion in \eqref{eq:spice}, let
\begin{equation}
	F(\covp, \covnoise) \triangleq \min_{\linp \in \linpset(\covp, \covnoise)} J(\linp; \covp, \covnoise).
\end{equation}
\begin{thm}\label{thm:f}
	For $(\covp, \covnoise) \in \covset$ such that $\out \in \range(\covout(\covp, \covnoise))$, 
	\[
		F(\covp, \covnoise) = J(\linphat(\covp, \covnoise); \covp, \covnoise) = \frac{1}{\|\out\|_2^2} \| \out \out^\top - \covout \|_{\covout^\dagger}^2 +2,
	\]
	where $\linphat(\covp, \covnoise) = \covp \reg^\top \covout^\dagger \out$.
\end{thm}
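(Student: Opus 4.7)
The key observation is that the cost $J(\linp;\covp,\covnoise)$ differs from the criterion in \eqref{eq:original_opt} only by the term $\tfrac{1}{\|\out\|_2^2}\tr\{\covout\}$, which does not depend on $\linp$. Hence, the feasibility set $\linpset(\covp,\covnoise)$ being non-empty by the hypothesis $\out\in\range(\covout)$, Theorem~\ref{thm:original_sol} applies and gives that the unique minimizer is $\linphat(\covp,\covnoise)=\covp\reg^\top\covout^\dagger\out$. This justifies the first equality $F(\covp,\covnoise)=J(\linphat(\covp,\covnoise);\covp,\covnoise)$. Thus, the only substantive work is to evaluate $J$ at this minimizer and match it to the stated SPICE-style expression.

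The plan for the second equality is a direct algebraic computation based on two identities: $\reg\covp\reg^\top=\covout-\covnoise$ (from the definition of $\covout$) and $\covout\,\covout^\dagger\out=\out$ (which holds by the assumption $\out\in\range(\covout)$). Using these, I would first show that $\out-\reg\linphat=\covnoise\,\covout^\dagger\out$, so that $\|\out-\reg\linphat\|_{\covnoise^\dagger}^2=\out^\top\covout^\dagger\covnoise\,\covout^\dagger\out$ after invoking $\covnoise\,\covnoise^\dagger\covnoise=\covnoise$. A parallel calculation gives $\|\linphat\|_{\covp^\dagger}^2=\out^\top\covout^\dagger\reg\covp\reg^\top\covout^\dagger\out$ via $\covp\,\covp^\dagger\covp=\covp$. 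Summing and using $\covnoise+\reg\covp\reg^\top=\covout$ together with $\covout^\dagger\covout\,\covout^\dagger=\covout^\dagger$ collapses the two fitting terms into the single scalar $\out^\top\covout^\dagger\out$. Therefore
\[
J(\linphat;\covp,\covnoise)=\out^\top\covout^\dagger\out+\frac{1}{\|\out\|_2^2}\tr\{\covout\}.
\]

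Next, I would expand the SPICE-type covariance-fitting term by brute force:
\[
\|\out\out^\top-\covout\|_{\covout^\dagger}^2=\tr\{\out\out^\top\covout^\dagger\out\out^\top\}-2\tr\{\covout\,\covout^\dagger\out\out^\top\}+\tr\{\covout\,\covout^\dagger\covout\}.
\]
The first trace equals $\|\out\|_2^2\,\out^\top\covout^\dagger\out$; the second equals $\|\out\|_2^2$ once we apply $\covout\,\covout^\dagger\out=\out$; and the third equals $\tr\{\covout\}$. Dividing by $\|\out\|_2^2$ and adding $2$ produces exactly the right-hand side of the theorem, matching the formula obtained for $J(\linphat;\covp,\covnoise)$.

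The main technical subtlety—rather than any hard obstacle—is being careful with pseudoinverse identities when $\covp$, $\covnoise$, or $\covout$ are singular. In particular, the step $\covout^\dagger\covout\,\covout^\dagger=\covout^\dagger$ is standard, but the cancellation $\covout\,\covout^\dagger\out=\out$ must be justified via the hypothesis $\out\in\range(\covout)$; without this hypothesis neither the minimizer formula nor the evaluation goes through. Once these pseudoinverse manipulations are handled cleanly, the proof is a short symbolic calculation.
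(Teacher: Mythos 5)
Your proposal is correct and follows essentially the same route as the paper's own proof: invoke Theorem~\ref{thm:original_sol} for the first equality, use $\out-\reg\linphat=\covnoise\covout^\dagger\out$ and the pseudoinverse identities to collapse the two fitting terms into $\out^\top\covout^\dagger\out$, and then expand $\|\out\out^\top-\covout\|_{\covout^\dagger}^2$ using $\covout\covout^\dagger\out=\out$. The only difference is that you spell out the intermediate algebra that the paper states more tersely.
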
	
\begin{proof}
	See Appendix~\ref{app:f}.
\end{proof}
From this it can be seen that the minimization of $F(\covp, \covnoise)$ is equivalent to minimization of \eqref{eq:spice}. We will now switch the order of minimization. Let
\begin{equation}\label{eq:G}
	\begin{aligned}	G(\linp) \triangleq \inf_{(\covp, \covnoise) \in \covset} &\quad J(\linp; \covp, \covnoise) \\ \text{s.t.} &\quad \linp \in \linpset(\covp, \covnoise) \end{aligned}
\end{equation}
It will be seen below that in most cases the infimum in \eqref{eq:G} will be attained by some $(\covp, \covnoise)$ for all $\linp$. However, when $\covpset$ contains all positive semi-definite matrices and $\reg^\top \reg$ is singular, there is a special (but uninteresting) case where this is not true. So for generality we use $\inf$ instead of $\min$ here. The following theorem shows that as long as all $\linp_G \in \argmin_{\linp} G(\linp)$ are such that the infimum is attained, then
\[
	\linpset^* = \argmin_{\linp \in \real^d} G(\linp).
\]
\begin{thm}\label{thm:main}
The set $\linpset^*$ in \eqref{eq:linpset} satisfy
\[
	\linpset^* \subseteq \argmin_{\linp \in \real^d} G(\linp).
\]
Furthermore, for all
\[
	\linp_G \in \argmin_{\linp \in \real^d} G(\linp)
\]
for which the infimum in \eqref{eq:G} is attained, $\linp_G \in \linpset^*$.
\end{thm}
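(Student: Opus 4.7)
My plan is to view the theorem as an interchange-of-order statement for the joint minimization $\min J(\linp; \covp, \covnoise)$ subject to $(\covp, \covnoise) \in \covset$ and $\linp \in \linpset(\covp, \covnoise)$. Minimizing first over $\linp$ produces $F(\covp, \covnoise)$, whose minimizers $(\covp^*, \covnoise^*)$ are exactly those of \eqref{eq:spice} by Theorem~\ref{thm:f}, so the corresponding $\linphat(\covp^*, \covnoise^*)$ trace out $\linpset^*$. Minimizing first over $(\covp, \covnoise)$ produces $G(\linp)$, whose argmin is the right-hand side of the theorem. A preliminary observation I would establish is that the feasibility condition $\linp \in \linpset(\covp, \covnoise)$ implies $\out \in \range(\covout(\covp, \covnoise))$, using $\range(\covout) = \range(\covnoise) + \range(\reg \covp \reg^\top)$ together with the fact that $\linp \in \range(\covp)$ gives $\reg\linp \in \range(\reg \covp \reg^\top)$; this is what lets Theorem~\ref{thm:f} be invoked at every feasible point and delivers $J(\linp; \covp, \covnoise) \geq F(\covp, \covnoise)$ for all such $\linp$.

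For the inclusion $\linpset^* \subseteq \argmin G$, I would pick $\linphat^* = \linphat(\covp^*, \covnoise^*) \in \linpset^*$. Since $\linphat^* \in \linpset(\covp^*, \covnoise^*)$ by Theorem~\ref{thm:original_sol}, the upper bound $G(\linphat^*) \leq J(\linphat^*; \covp^*, \covnoise^*) = F(\covp^*, \covnoise^*)$ is immediate. For the matching lower bound, I first note that $G$ is everywhere finite because $\covpset$ and $\covnoiseset$ contain positive definite matrices, making $\linpset(\covp, \covnoise) = \real^d$ for some admissible pair; then for any $\linp \in \real^d$ and any $(\covp, \covnoise)$ with $\linp \in \linpset(\covp, \covnoise)$, the preliminary observation and the optimality of $(\covp^*, \covnoise^*)$ yield $J(\linp; \covp, \covnoise) \geq F(\covp, \covnoise) \geq F(\covp^*, \covnoise^*)$, and taking the infimum over $(\covp, \covnoise)$ gives $G(\linp) \geq F(\covp^*, \covnoise^*)$. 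Combining the two bounds shows $\linphat^* \in \argmin G$.

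For the second part, suppose $\linp_G \in \argmin G$ and the infimum in \eqref{eq:G} is attained at some $(\covp_G, \covnoise_G) \in \covset$. Then $G(\linp_G) = J(\linp_G; \covp_G, \covnoise_G) \geq F(\covp_G, \covnoise_G) \geq F(\covp^*, \covnoise^*)$, while the first part gives $G(\linp_G) = G(\linphat^*) = F(\covp^*, \covnoise^*)$, so equality holds throughout. The equality $J(\linp_G; \covp_G, \covnoise_G) = F(\covp_G, \covnoise_G)$ together with the uniqueness of the inner minimizer in Theorem~\ref{thm:original_sol} forces $\linp_G = \linphat(\covp_G, \covnoise_G)$, and $F(\covp_G, \covnoise_G) = F(\covp^*, \covnoise^*)$ exhibits $(\covp_G, \covnoise_G)$ as another minimizer of $F$; hence $\linp_G \in \linpset^*$. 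The main obstacle in this plan is the preliminary range-inclusion fact that links feasibility of the inner and outer problems; once that is in hand, everything reduces to the two-sided sandwich $F(\covp^*, \covnoise^*) \leq G(\linp) \leq J(\linp; \covp, \covnoise)$, and the attainment hypothesis is seen to be the precise condition that rules out an \emph{unrealized} infimum blocking the identification $\linp_G = \linphat(\covp_G, \covnoise_G)$.
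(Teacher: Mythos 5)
Your proposal is correct and follows essentially the same route as the paper's own proof: the sandwich $F(\covp^*, \covnoise^*) \leq G(\linp) \leq J(\linp; \covp, \covnoise)$, with Theorem~\ref{thm:f} identifying minimization of $F$ with the covariance-fitting criterion and the uniqueness part of Theorem~\ref{thm:original_sol} forcing $\linp_G = \linphat(\covp_G, \covnoise_G)$ in the converse direction. The only difference is that you spell out the range-inclusion fact (feasibility of $\linp$ implies $\out \in \range(\covout)$) and the finiteness of $G$, which the paper absorbs into the proof of Theorem~\ref{thm:original_sol} and the standing assumption that $\covpset$ and $\covnoiseset$ contain positive definite matrices.
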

\begin{proof}
	See Appendix~\ref{app:main}
\end{proof}
Finally, we will explore how the choice of $\covpset$ and $\covnoiseset$ determines $G(\linp)$. Note that
\[
	G(\linp) = h(\out-\reg\linp, \eye; \covnoiseset) + h(\linp, \reg^\top\reg; \covpset)
\]
where
\begin{equation}\label{eq:h}
	\begin{aligned}
	h(\genp, \weight; \covgenpset) = \inf_{\covgenp \in \covgenpset} &\, f(\genp, \covgenp; \weight) \\
	\text{s.t. } &\, \genp \in \range(\covgenp)
\end{aligned}
\end{equation}
The following lemmas explore how different choices of the set $\covgenpset$ will affect the functional form of $h$.
\begin{lem}
	If $\covgenpset = \{ \covgenp : \covgenp = \kappa \eye \text{ with }  0 \leq \kappa < \infty \}$, then 
	\[
		h(\genp; \weight, \covgenpset) = \frac{2}{\|\out\|_2} \| \genp \|_2 \sqrt{\tr\{\weight\}},
	\]
	and the minimum in \eqref{eq:h} is attained by 
	\[
		\what{\covgenp} = \frac{\|\out\|_2 \| \genp\|_2}{\sqrt{\tr\{ \weight\}}} \eye.
	\]
\end{lem}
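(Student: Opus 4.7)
The plan is to reduce the infimum in \eqref{eq:h} to a one-dimensional minimization over the nonnegative scalar $\kappa$ parameterizing $\covgenp = \kappa\I$. First, I would substitute this parameterization into $f$; for $\kappa > 0$ one has $(\kappa\I)^\dagger = \kappa^{-1}\I$ and $\range(\kappa\I) = \real^d$, so the feasibility constraint $\genp \in \range(\covgenp)$ is vacuous and the objective reduces to the scalar function
\[
\phi(\kappa) \triangleq \frac{\|\genp\|_2^2}{\kappa} + \frac{\kappa\,\tr\{\weight\}}{\|\out\|_2^2}.
\]
For $\kappa = 0$, the constraint $\genp \in \range(\0)$ is feasible only when $\genp = \0$, in which case the objective equals $0$.

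Next, I would handle the nontrivial case $\genp \neq \0$ by applying the AM--GM inequality $a/\kappa + b\kappa \geq 2\sqrt{ab}$, which holds for any $a,b > 0$ with equality iff $\kappa = \sqrt{a/b}$. Taking $a = \|\genp\|_2^2$ and $b = \tr\{\weight\}/\|\out\|_2^2$ immediately yields
\[
\phi(\kappa) \;\geq\; \frac{2\,\|\genp\|_2\sqrt{\tr\{\weight\}}}{\|\out\|_2},
\]
with equality attained at the unique minimizer $\what\kappa = \|\out\|_2\|\genp\|_2/\sqrt{\tr\{\weight\}}$. Setting $\what{\covgenp} = \what\kappa\,\I$ then recovers the expressions claimed for $h$ and $\what{\covgenp}$.

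Finally, I would verify the degenerate case $\genp = \0$ separately: here $\what\kappa = 0$ is consistent with the stated formula (both sides vanish), and $\what{\covgenp} = \0$ is feasible. The only real subtlety is the careful handling of the pseudoinverse and of the range constraint at the boundary $\kappa = 0$, so that the infimum is indeed attained whenever $\tr\{\weight\} > 0$; beyond this, the argument is a textbook one-variable optimization, and I do not anticipate any substantial obstacle.
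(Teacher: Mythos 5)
Your proposal is correct and follows essentially the same route as the paper: substitute $\covgenp=\kappa\eye$, minimize the resulting scalar function of $\kappa$ (the paper sets the derivative to zero, you use AM--GM, which is equivalent here), and treat $\genp=\0$ separately with $\kappa=0$. Your explicit handling of the range constraint at $\kappa=0$ and the caveat $\tr\{\weight\}>0$ is slightly more careful than the paper's one-line argument, but it is not a different proof.
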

\begin{proof} For $\genp \neq \0$, just insert $\covgenp = \kappa \eye$ in \eqref{eq:h}, and set the derivative with respect to $\kappa$ to zero. If $\genp = \0$, then $\kappa = 0$ gives $h = 0$, which clearly is the minimum. \end{proof}
\begin{lem}\label{lem:diagonal}
	If $\covgenpset = \{ \covgenp : \covgenp = \text{\emph{diag}}(a_1, \ldots, a_d), 0 \leq a_i < \infty \}$, then 
	\[
		h(\genp; \weight, \covgenpset) = \frac{2}{\| \out\|_2} \left\| \sqrt{ \eye \odot \weight} \genp \right\|_1.
	\]
	and the minimum in \eqref{eq:h} is attained by
	\[
		\what{\covgenp} = \|\out\|_2 \text{\emph{diag}}\left( \frac{ | x_1|}{\sqrt{w_{1,1}}}, \ldots, \frac{|x_d|}{\sqrt{w_{d,d}}} \right).
	\]
	where $w_{i,i}$ is the $i$th diagonal elements of $\weight$.
\end{lem}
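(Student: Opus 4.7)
The plan is to follow the same template as the previous lemma: substitute the diagonal parametrization into $f$, observe that the problem decouples coordinate-wise, and minimize each scalar term via AM--GM.

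First I would write $\covgenp = \diag(a_1,\dots,a_d)$ with $a_i \ge 0$, so that $\covgenp^\dagger = \diag(a_1^\dagger,\dots,a_d^\dagger)$ with $a_i^\dagger = 1/a_i$ when $a_i>0$ and $a_i^\dagger = 0$ otherwise. This gives
\[
\| \genp \|_{\covgenp^\dagger}^2 = \sum_{i=1}^d a_i^\dagger\, x_i^2, \qquad \tr\{\weight \covgenp\} = \sum_{i=1}^d w_{i,i}\, a_i,
\]
and the range constraint $\genp \in \range(\covgenp)$ forces $x_i = 0$ whenever $a_i = 0$. Consequently the minimization in \eqref{eq:h} decouples into $d$ independent scalar problems
\[
\min_{a_i \ge 0,\ a_i>0\ \text{if}\ x_i\neq 0}\ \frac{x_i^2}{a_i} + \frac{w_{i,i}\, a_i}{\|\out\|_2^2}.
\]

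Next I would apply the AM--GM inequality to each scalar problem: for any $a_i > 0$,
\[
\frac{x_i^2}{a_i} + \frac{w_{i,i}\, a_i}{\|\out\|_2^2} \;\ge\; 2\sqrt{\frac{x_i^2\, w_{i,i}}{\|\out\|_2^2}} \;=\; \frac{2\,|x_i|\sqrt{w_{i,i}}}{\|\out\|_2},
\]
with equality when the two summands are equal, i.e., $a_i = \|\out\|_2\, |x_i|/\sqrt{w_{i,i}}$. If $x_i = 0$, the bound is $0$ and is attained at $a_i = 0$ (which is also consistent with the formula in the statement, interpreting $0/\sqrt{w_{i,i}}$ as $0$). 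Summing these bounds over $i$ and recognizing $\sqrt{\eye\odot\weight} = \diag(\sqrt{w_{1,1}},\dots,\sqrt{w_{d,d}})$, so that $\sum_i \sqrt{w_{i,i}}\,|x_i| = \|\sqrt{\eye\odot\weight}\,\genp\|_1$, yields the claimed expression for $h(\genp;\weight,\covgenpset)$ and identifies the minimizer $\what{\covgenp}$.

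The only subtlety is degenerate indices with $w_{i,i} = 0$: then the second summand vanishes and, if $x_i \neq 0$, the infimum in that coordinate is attained only in the limit $a_i \to \infty$. The AM--GM lower bound $0$ still matches this behavior (the infimum is $0$), so the formula for $h$ is correct even though $\what{\covgenp}$ may fail to exist in that edge case; I would mention this briefly. In the applications of interest ($\weight = \eye$ or $\weight = \reg^\top \reg$ with no zero regressor column), all $w_{i,i}$ are strictly positive and the closed-form minimizer in the statement is well defined, so this does not affect the main results.
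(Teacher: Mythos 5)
Your proposal is correct and follows essentially the same route as the paper's proof: substitute the diagonal parametrization, decouple into $d$ scalar problems under the range constraint, and minimize each term (the paper sets the derivative to zero where you invoke AM--GM, which is the same computation). Your extra remark on the degenerate case $w_{i,i}=0$, $x_i\neq 0$ is a small honest addition the paper glosses over, and it does not affect the cases actually used ($\weight=\eye$ or $\weight=\reg^\top\reg$ with nonzero columns).
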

\begin{proof}
	See Appendix~\ref{app:diagonal}
\end{proof}
\begin{lem}\label{lem:full}
	If $\covgenpset = \{ \covgenp : \covgenp \succeq \0 \}$ then 
	\[
		h(\genp; \weight, \covgenpset) = \frac{2}{\| \out\|_2} \| \genp \|_{\weight}.
	\]
	The minimum in \eqref{eq:h} is attained if $\genp = \0$ or $\weight \genp \neq \0$, and is then given by
	\[
		\what{\covgenp} = \begin{cases} \frac{ \|\out\|_2}{\|\genp\|_{\weight}} \genp \genp^\top & \text{if } \weight \genp \neq \0 \\ \0 & \text{if } \genp = \0 \end{cases}.
	\]
\end{lem}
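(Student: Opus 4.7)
The strategy is to establish $h = \tfrac{2}{\|\out\|_2}\|\genp\|_\weight$ through matching bounds --- an AM-GM based lower bound valid for every feasible $\covgenp$, and an upper bound attained by the rank-one candidate stated in the lemma --- and then to verify the two attainability clauses case by case.

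The degenerate case $\genp = \0$ is immediate: choosing $\covgenp = \0$ satisfies $\genp \in \range(\covgenp)$ trivially and makes both terms of $f$ vanish, matching $\tfrac{2}{\|\out\|_2}\|\0\|_\weight = 0$. For $\genp \neq \0$, AM-GM applied to the two summands of $f$ yields
\[
\genp^{\T}\covgenp^{\dagger}\genp + \tfrac{1}{\|\out\|_2^2}\tr(\weight\covgenp) \;\geq\; \tfrac{2}{\|\out\|_2}\sqrt{(\genp^{\T}\covgenp^{\dagger}\genp)\,\tr(\weight\covgenp)},
\]
so the lower bound reduces to the key trace inequality
\[
(\genp^{\T}\covgenp^{\dagger}\genp)\,\tr(\weight\covgenp) \;\geq\; \genp^{\T}\weight\genp,
\]
which is the one substantive step of the proof. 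I would establish it by introducing $\mbf{b} = (\covgenp^{1/2})^{\dagger}\genp$; since $\genp \in \range(\covgenp) = \range(\covgenp^{1/2})$, this gives $\covgenp^{1/2}\mbf{b} = \genp$ and $\|\mbf{b}\|_2^2 = \genp^{\T}\covgenp^{\dagger}\genp$. Writing $\mbf{A} = \covgenp^{1/2}\weight\covgenp^{1/2} \succeq \0$ (a congruence of $\weight$), the inequality becomes $\mbf{b}^{\T}\mbf{A}\mbf{b} \leq \lambda_{\max}(\mbf{A})\|\mbf{b}\|_2^2 \leq \tr(\mbf{A})\|\mbf{b}\|_2^2$, using the elementary bound $\lambda_{\max} \leq \tr$ for a PSD matrix, together with $\tr(\mbf{A}) = \tr(\weight\covgenp)$.

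For achievability when $\weight\genp \neq \0$, direct substitution of $\what{\covgenp} = \tfrac{\|\out\|_2}{\|\genp\|_\weight}\genp\genp^{\T}$ (which is rank one, so the range constraint is automatic) yields $\genp^{\T}\what{\covgenp}^{\dagger}\genp = \|\genp\|_\weight/\|\out\|_2$ and $\tr(\weight\what{\covgenp}) = \|\out\|_2\|\genp\|_\weight$, producing equality in both AM-GM and the trace inequality and therefore $f(\genp,\what{\covgenp};\weight) = \tfrac{2}{\|\out\|_2}\|\genp\|_\weight$. In the remaining case $\genp \neq \0$ and $\weight\genp = \0$ the formula gives $h = 0$, yet for every feasible $\covgenp$ one has $\genp^{\T}\covgenp^{\dagger}\genp > 0$ (since $\genp$ is a nonzero element of $\range(\covgenp^{\dagger}) = \range(\covgenp)$ and $\covgenp^{\dagger} \succeq \0$), so $f > 0$; however, plugging in $\covgenp = \alpha\genp\genp^{\T}$ gives $f = 1/\alpha \to 0$ as $\alpha \to \infty$, confirming that the infimum equals $0$ but is not attained, in line with the lemma. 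The main obstacle is the trace inequality, and after the square-root substitution it collapses to the elementary bound $\lambda_{\max}(\mbf{A}) \leq \tr(\mbf{A})$ for a PSD matrix.
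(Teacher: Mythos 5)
Your proof is correct, but the route to the lower bound differs from the paper's. The paper eigendecomposes $\covgenp = \covpu\covpe\covpu^\top$, applies AM--GM separately in each eigendirection, and then needs the triangle inequality for the $\weight$-seminorm together with $\covpu\covpu^\top\genp = \genp$ to reassemble $\|\genp\|_\weight$; this mirrors the structure of the diagonal case (Lemma~\ref{lem:diagonal}), with each eigendirection playing the role of a diagonal entry. You instead apply AM--GM once, globally, to the two terms of $f$ and reduce everything to the single trace inequality $(\genp^\top\covgenp^\dagger\genp)\,\tr(\weight\covgenp) \geq \genp^\top\weight\genp$, proved via the substitution $\mbf{b} = (\covgenp^{1/2})^\dagger\genp$ (legitimate precisely because the constraint gives $\genp \in \range(\covgenp) = \range(\covgenp^{1/2})$, so $\covgenp^{1/2}\mbf{b} = \genp$ and $\|\mbf{b}\|_2^2 = \genp^\top\covgenp^\dagger\genp$) and the elementary bound $\mbf{b}^\top\mbf{A}\mbf{b} \leq \lambda_{\max}(\mbf{A})\|\mbf{b}\|_2^2 \leq \tr(\mbf{A})\|\mbf{b}\|_2^2$ for $\mbf{A} = \covgenp^{1/2}\weight\covgenp^{1/2} \succeq \0$, with $\tr(\mbf{A}) = \tr(\weight\covgenp)$. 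Your version is more compact and avoids both the eigendecomposition and the triangle-inequality step, at the cost of obscuring the analogy with the diagonal case; the paper's term-by-term argument makes explicit where slack accumulates for higher-rank $\covgenp$. The attainability check by direct substitution of the rank-one candidate, and the treatment of the boundary case $\genp \neq \0$, $\weight\genp = \0$ (strict positivity of $\|\genp\|^2_{\covgenp^\dagger}$ for any feasible finite $\covgenp$, with $f = 1/\alpha \to 0$ along $\covgenp = \alpha\genp\genp^\top$), coincide with the paper's.
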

\begin{rem}\label{rem:full}
	In the case that $\genp \neq \0$ but $\weight \genp = \0$, the infimum in \eqref{eq:h} is zero but it is not attained by any finite $\covgenp$. 
	This can only occur in the term related to $\covp$, if $\weight = \reg^\top \reg$ is singular. It will only be a problem if $G(\linp)$ is minimized by $\linphat \neq \0$ such that $\reg \linphat = \0$. In this case $\linphat \notin \linpset^*$. However, then $G(\linp)$ is also minimized by $\linp = \0 \in \linpset^*$.
\end{rem}

With these lemmas we can take different combinations of $\covpset$ and $\covnoiseset$ and find the corresponding $G(\linp)$ to see that minimization of $G(\linp)$ is equivalent to the results in Section~\ref{sec:results}.

\section{Conclusion}
We began by showing that a weighted and constrained minimization problem spans a class of estimators that encompass three known optimal linear estimators. The constrained form ensures that singular covariance matrices can be handled, while the weight matrices determine the resulting estimator.

However, the optimal weight matrices depend on the unknown parameters, or their prior covariance matrix, as well as the noise covariance. Since these properties are typically unknown, how should the weight matrices be chosen based only on the data? The proposed method in this paper was to use the covariance-fitting SPICE-methodology to find data-adaptive weight matrices. Interestingly, while the class of estimators is an $\ell_2$-regularized form of weighted least-squares, using the data-adaptive weights yielded several different known tuned regularized estimators -- ridge regression, LASSO, and regularized least absolute deviation -- depending on the assumed structure of the unknown covariances matrices.  In this way the paper connects several important estimators, and also extends the analysis of the SPICE-methodology to singular covariance matrices. 

Finally a numerical experiment was performed. It was seen that when the covariance matrices of the data-generating process corresponded to the structure assumed in the SPICE-criterion, the resulting estimator is not far from the optimal one. Furthermore, assuming a more general structure than necessary does not incur any significant loss to performance. These observations suggest that it is sensible to assume nonuniform diagonal structure for the covariance matrices when no prior knowledge about the data-generating process is available, and thus use an L1-WL1 estimator.

\appendix

\subsection{Proof of Theorem~\ref{thm:blue}}\label{app:blue}
It can be seen that the optimization is feasible if and only if $\out \in \range(\reg\reg^\top + \covnoise)$, cf. Appendix~\ref{app:original_sol}. Also note that this is satisfied for any $\out$ generated according to \eqref{eq:model} if $\covnoise = \alpha\truecovnoise$.

The constraint $\out - \reg \linp \in \range(\covnoise)$ can be written as 
\[
	\covnoise^\dagger \covnoise (\out - \reg \linp) = \out - \reg \linp 
\] 
and with some slight rearrangement we get
\[
(\eye - \covnoise^\dagger \covnoise) \reg \linp = (\eye - \covnoise^\dagger \covnoise) \out.
\]
Hence $\linphat$ is an optimal solution if $\out - \reg\linphat \in \range(\covnoise)$ and there exist $\mbs{\lambda}$ such that
\begin{equation}\label{eq:optcond_blue}
	-\reg^\top \covnoise^\dagger( \out - \reg \linphat ) + \reg^\top( \eye - \covnoise^\dagger \covnoise) \mbs{\lambda} = \0.
\end{equation}
This solution is unique if $\reg$ has full column rank, since the constrained problem is strictly convex in this case.

In order to find a solution, we assume that $\out \in \range(\reg\reg^\top + \covnoise)$ so the problem is feasible. Hence we can write 
\[
	\out = (\reg\reg^\top + \covnoise) \genp
\]
for some $\genp$. 
We will now show that the optimality conditions are satisfied by
\begin{equation}\label{eq:blue_analytic}
	\linphat(\covnoise) = \reg^\dagger [ \eye - \covnoise \genmat (\genmat \covnoise \genmat)^\dagger\genmat] \out,
\end{equation}
where $\genmat = \eye - \reg \reg^\dagger$. Note that $\genmat \reg = \0$ and $\reg^\top \genmat = \0$. Hence $\genmat \out = \genmat \covnoise \genp$, and
\[
	\genmat \covnoise \genmat (\genmat \covnoise \genmat)^\dagger \genmat \out = \genmat\covnoise\genmat(\genmat\covnoise\genmat)^\dagger \genmat \covnoise \genp = \genmat \covnoise \genp,
\]
where the last equality can be seen by setting $\covnoise = \mbf{L} \mbf{L}^\top$ and then make use of the pseudo-inverse identity $ \mbf{X} \mbf{X}^\top (\mbf{X} \mbf{X}^\top)^\dagger \mbf{X} = \mbf{X}$. This can be used to see that
\begin{align*}
	\out - \reg\linphat &= \out - (\eye - \genmat)(\eye - \covnoise \genmat(\genmat \covnoise \genmat)^\dagger \genmat) \out \\
			 &= \covnoise \genmat(\genmat \covnoise\genmat)^\dagger \genmat \out \in \range(\covnoise).
\end{align*}
Hence, setting 
\[
	\mbs{\lambda} = - \genmat(\genmat \covnoise \genmat)^\dagger \genmat \out
\]
in \eqref{eq:optcond_blue} shows that \eqref{eq:blue_analytic} is indeed the optimal solution.

The theorem then follows by noting that \eqref{eq:blue_analytic} gives the \blue{} when $\reg$ has full rank, see e.g. \cite{searle1989blue}.

\subsection{Proof of Theorem~\ref{thm:original_sol}\label{app:original_sol}}
We first show that $\linpset(\covp, \covnoise)$ is non-empty if and only if $\out \in \range(\covout)$. First assume that $\linpset(\covp, \covnoise)$ is non-empty and that $\linp \in \linpset(\covp, \covnoise)$. Hence $\out - \reg \linp \in \range(\covnoise)$ so $\out - \reg \linp =\covnoise \genp$ for some $\genp$, and
\[
\out = \reg \linp + \covnoise \genp.
\]
Clearly $\covnoise \genp \in \range(\covnoise) \subseteq \range(\covout)$. Furthermore $\linp \in \range(\covp) = \range(\covp^{1/2})$, so
\[
	\reg \linp \in \range(\reg \covp^{1/2}) = \range(\reg \covp \reg^\top) \subseteq \range(\covout).
\]
With this we can conclude that $\out \in \range(\covout)$.

In the other direction, assume that $\out \in \range(\covout)$. We can clearly see that $\linphat = \covp \reg^\top \covout^\dagger \out \in \range(\covp)$. Furthermore $\out \in \range(\covout)$ implies that $\out = \covout \covout^\dagger \out$, so 
\begin{equation}\label{eq:res_in_V}
	\out - \reg \linphat = \covout\covout^\dagger \out - \reg \covp \reg^\top \covout^\dagger \out = \covnoise \covout^\dagger \out \in \range(\covnoise).
\end{equation}
Hence it can be concluded that $\linphat \in \linpset(\covp, \covnoise)$.

To show that there is a unique solution when $\out \in \range(\covout)$, note that $\| \linp \|_{\covp^\dagger}^2$ is strictly convex on $\linpset(\covp, \covnoise)$. Hence, the full problem is strictly convex, so it has a unique solution if it is feasible. 

The constraint $\linp \in \linpset(\covp, \covnoise)$ can be written as
\[
	\begin{bmatrix} (\eye - \covnoise^\dagger \covnoise) \reg \\ \eye - \covp^\dagger \covp \end{bmatrix} \linp = \begin{bmatrix} (\eye - \covnoise^\dagger \covnoise) \out \\ \0 \end{bmatrix}.
\]
Hence $\linphat$ is optimal if $\linphat \in \linpset(\covp, \covnoise)$ and there exists $\mbs{\lambda}$ such that
\[
	-\reg^\top \covnoise^\dagger (\out - \reg \linphat) + \covp^\dagger \linphat + \begin{bmatrix} \reg^\top (\eye - \covnoise^\dagger \covnoise) & \eye - \covp^\dagger \covp \end{bmatrix}\mbs{\lambda} = \0.
\]
Above we have seen that $\linphat = \covp \reg^\top \covout^\dagger \out \in \linpset(\covp, \covnoise)$ if $\out \in \range(\covout)$. 
Furthermore, by using \eqref{eq:res_in_V}, it can be seen that the optimality equation is satisfied with 
\[
	\mbs{\lambda} = \begin{bmatrix} -\covout^\dagger\out \\ \reg^\top \covout^\dagger\out  \end{bmatrix},
\]
so $\linphat = \covp \reg^\top \covout^\dagger \out$ is indeed the unique optimal solution if $\out \in \range(\covout)$.

\subsection{Proof of Theorem~\ref{thm:optimal_cov} and Theorem~\ref{thm:lmmse}}\label{app:optimal}
We here prove the theorems for the case that $\alpha = 1$, but note that scaling $\covp$ and $\covnoise$ with the same constant will not change $\linphat$.
Consider any linear estimator
\[
\linphat = \genmat \out.
\]
The MSE is then given by 
\[
	\mse(\truelinp) = \tr\left( (\eye - \genmat \reg) \truelinp \linp_\circ^\top (\eye - \genmat \reg)^\top + \genmat \covnoise_\circ \genmat \right).  
\]
To show Theorem~\ref{thm:optimal_cov} we set $\covp = \truelinp\linp_\circ^\top$ and $\covnoise = \covnoise_\circ$. In Theorem~\ref{thm:lmmse} we take the expectation over the MSE and thus instead use $\covp = \covp_\circ$ where $\covp_\circ = \E[ \truelinp \linp_\circ^\top]$. It follows that in both cases we want to find the $\genmat$ that minimize the trace of 
\begin{multline*}
	\mbf{X}(\genmat) =	(\eye - \genmat\reg) \covp (\eye-\genmat \reg)^\top + \genmat \covnoise \genmat^\top = \\
	\covp + \genmat \covout \genmat^\top - \genmat \reg \covp - \covp \reg^\top \genmat^\top,
\end{multline*}
where $\covout = \reg \covp \reg^\top + \covnoise$.
From Theorem~\ref{thm:original_sol} we know that minimizing \eqref{eq:original_opt} corresponds to 
\[
\genmat^* = \covp \reg^\top \covout^\dagger.
\]
Using the identity $\covout^\dagger \covout \covout^\dagger = \covout^\dagger$ it follows that
\[
	\mbf{X}(\genmat^*) = \covp - \covp \reg^\top \covout^\dagger \reg \covp.
\]
Using this we can see that for any $\genmat$
\[
	\mbf{X}(\genmat) - \mbf{X}(\genmat^*) = (\genmat - \covp \reg^\top \covout^\dagger) \covout ( \genmat - \covp \reg^\top \covout^\dagger)^\top \succeq \0.
\]
To see that this equality holds, just expand the right-hand side and use the fact that $\covout^\dagger \covout \reg \covp = \covout \covout^\dagger \reg\covp = \reg \covp$ since $\range(\reg \covp) \subseteq \range(\covout)$.
This shows that $\tr(\mbf{X}(\genmat)) \geq \tr(\mbf{X}(\genmat^*))$ so the two theorems follow.

\subsection{General $\covp$}\label{app:general}
In this section we will show that \eqref{eq:spice} with general positive semi-definite $\covp$ do not result in any regularization compared to least squares if there exist a $\linp$ such that $\out = \reg \linp$. In this case any least squares solution $\linphat_{\text{LS}}$ satisfies $\out = \reg \linphat_{\text{LS}}$.

Hence, we can minimize \eqref{eq:spice} by setting $\covnoise^* = \0$ and $\covp^* = \linphat_{\text{LS}} \linphat_{\text{LS}}^\top$, since this gives $\covout =  \out \out^\top$. 
Using Theorem~\ref{thm:original_sol} we thus get
\[
	\linphat(\covp^*, \covnoise^*) = \covp^* \reg^\top \covout^\dagger \out = \linphat_{\text{LS}} \out^\top (\out\out^\top)^\dagger \out = \linphat_{\text{LS}}.
\]

\subsection{Proof of Theorem \ref{thm:f}}\label{app:f}
Consider $(\covp, \covnoise) \in \covset$ such that $\out \in \range(\covout)$. From Theorem~\ref{thm:original_sol} it follows that
\[
	F(\covp, \covnoise) = J(\linphat(\covp, \covnoise); \covp, \covnoise).
\]
Using \eqref{eq:res_in_V} we see that $\out - \reg \linphat = \covnoise \covout^\dagger \out$, so 
\begin{align*}
	F(\covp, \covnoise) &= \| \covnoise \covout^\dagger \out \|_{\covnoise^\dagger}^2 + \| \covp \reg^\top \covout^\dagger \out\|_{\covp^\dagger}^2 + \frac{1}{\|\out\|_2^2} \tr\{ \covout\} \\
			    &= \| \out \|_{\covout^\dagger}^2 + \frac{1}{\|\out\|_2^2} \tr\{ \covout \}.
\end{align*}
Also note that, since $\out \in \range(\covout)$, it follows that $\out = \covout\covout^\dagger \out = \covout^\dagger \covout \out$ and thus the criterion in \eqref{eq:spice} can be rewritten as 
\[
	\| \out \out^\top - \covout \|_{\covout^\dagger}^2 = \|\out \|_2^2 \| \out \|_{\covout^\dagger}^2 + \tr\{ \covout \} - 2 \| \out \|_2^2.
\]
The theorem follows by combining these two expressions.

\subsection{Proof of Theorem \ref{thm:main}}\label{app:main}
Let $\linp^* \in \linpset^*$. By definition of $\linpset^*$ there exist a pair $(\covp^*, \covnoise^*) \in \covset$ that minimize \eqref{eq:spice} such that $\linp^* = \linphat(\covp^*, \covnoise^*)$ defined in \eqref{eq:original_opt}. 

It follows from Theorem~\ref{thm:f} that
\begin{align*}
	\min_{\linp} G(\linp) &= \inf_{\substack{ (\covp, \covnoise) \in \covset \\ \linp \in \linpset(\covp, \covnoise)}} J(\linp; \covp, \covnoise) \\
	&= \inf_{(\covp, \covnoise) \in \covset} F(\covp, \covnoise) = F(\covp^*, \covnoise^*).
\end{align*}
Next note that
\begin{align*}
	G(\linp^*) &= \inf_{\substack{ (\covp, \covnoise) \in \covset \\ \text{s.t. } \linp^* \in \linpset(\covp, \covnoise)}} J(\linp^*; \covp, \covnoise) \leq J(\linp^*; \covp^*, \covnoise^*) \\
		   &= F(\covp^*, \covnoise^*) = \min_{\linp} G(\linp)
\end{align*}
This implies that
\[
	\min_{\linp} G(\linp) = G(\linp^*) = F(\covp^*, \covnoise^*),
\]
and thus $\linp^*$ is a minimizer of $G(\linp)$. Hence $\linpset^* \subseteq \argmin_{\linp} G(\linp)$.

For the other direction, consider $\linp_G \in \argmin_{\linp} G(\linp)$, and assume that the infimum in \eqref{eq:G} is attained for $\linp_G$. That is, we assume that there are $(\covp_G, \covnoise_G) \in \covset$ such that $\linp_G \in \linpset(\covp_G, \covnoise_G)$ and
\[
	G(\linp_G) = \min_{\linp} G(\linp) = J(\linp_G; \covp_G, \covnoise_G).
\]
Note that
\[
	J(\linp_G; \covp_G, \covnoise_G) \geq \min_{\linp \in \linpset(\covp_G, \covnoise_G)} J(\linp; \covp_G, \covnoise_G) = F(\covp_G, \covnoise_G).
\]
However, since $\min_{\linp} G(\linp) = F(\covp^*, \covnoise^*) \leq F(\covp_G, \covnoise_G)$, the above inequality must actually be an equality. That is, $(\covp_G, \covnoise_G)$ is a solution to \eqref{eq:spice} and 
\[
	J(\linp_G; \covp_G, \covnoise_G) = \min_{\linp \in \linpset(\covp_G, \covnoise_G)} J(\linp; \covp_G, \covnoise_G).
\]
Finally note that it follows by Theorem~\ref{thm:original_sol} that the right-hand side has a unique minimizer, so $\linp_G = \linphat(\covp_G, \covnoise_G) \in \linpset^*$.

\subsection{Proof of Lemma~\ref{lem:diagonal}}\label{app:diagonal}
With $\covgenp = \text{diag}(a_1, \ldots, a_d)$ we get
\begin{align*}
	f(\genp, \covgenp, \weight) &= \| \genp \|_{\covgenp^\dagger}^2 + \frac{1}{\| \out \|_2^2} \tr\{ \weight \covgenp\} \\
				    &= \sum_{i=1}^{d} \left( a_i^\dagger x_i^2 + \frac{1}{\| \out\|_2^2}  a_i w_{i,i} \right)
\end{align*}
where $w_{i,i}$ are the diagonal elements of $\weight$ and 
\[
	a_i^\dagger = \begin{cases} 1/a_i & \text{if } a_i \neq 0 \\ 0 & \text{if } a_i = 0 \end{cases}.
\]
Note that setting $a_i = 0$ will make the corresponding term in the sum equal to zero. However, to satisfy the constraint $\genp \in \range(\covgenp)$, we must have $a_i > 0$ if $x_i \neq 0$. So if $x_i \neq 0$, then $a_i^\dagger = 1/a_i$, and we can find the optimal $a_i$ by taking the derivative and setting it equal to zero. This gives
\[
	a_i = \frac{ \|\out\|_2 |x_i|}{ \sqrt{ w_{i,i}}}.
\]
We note that this formula also works for the case that $x_i = 0$. Inserting this back into the sum we get
\[
	h(\genp, \weight; \covgenpset) = \frac{2}{\|\out\|_2} \sum_{i=1}^d \sqrt{w_{i,i}} | x_i| = \frac{2}{\|\out\|_2} \| \sqrt{ \eye \odot \weight} \genp \|_1.
\]
\subsection{Proof of Lemma~\ref{lem:full}}\label{app:full}
If $\genp = \0$ then we get $h(\genp, \weight; \covgenpset) = 0$ by setting $\covgenp = \0$. For $\genp \neq \0$, we will compute a lower bound on $f(\genp, \covgenp, \weight)$ and show that this lower bound can be achieved unless $\weight \genp = \0$.

Consider any $\genp \neq \0$, and any positive semi-definite $\covgenp$ such that $\genp \in \range(\covgenp)$ with rank $m$. Since $\genp \neq \0$, we must have $m \geq 1$.
Factorize $\covgenp$ as
\[
\covgenp = \covpu \covpe \covpu^\top
\]
where $\covpu \in \real^{d \times m}$ has orthonormal columns and $\covpe = \diag(\lambda_1, \ldots, \lambda_m) \succ \0$. Let $\covpuc_i$ be the $i$th column of $\covpu$. Then
$\covgenp^\dagger = \covpu \covpe^{-1} \covpu^\top$ and
\begin{align*}
	f(\genp, \covgenp; \weight) &= \tr( \covpu^\top \genp\genp^\top \covpu \covpe^{-1}) + \frac{1}{\|\out\|_2^2} \tr\left\{ \covpu^\top \weight \covpu \covpe \right\} \\
				    &= \sum_{i=1}^m \left(  \frac{1}{\lambda_i} | \covpuc_i^\top \genp|^2 + \frac{\lambda_i}{\| \out \|_2^2} \covpuc_i^\top \weight \covpuc_i \right)
\end{align*}
If we let $a_i = | \covpuc_i^\top \genp |/ \sqrt{\lambda}$ and $b_i = \frac{\sqrt{\lambda}}{\| \out\|_2} \sqrt{\covpuc_i^\top \weight \covpuc_i}$, then each term in the sum can be written as
\[
	a_i^2 + b_i^2 \geq 2 ab = \frac{2}{\|\out\|_2} | \covpuc_i^\top \genp | \sqrt{ \covpuc_i^\top \weight \covpuc_i}.
\]
Hence, 
\begin{align}
	f(\genp, \covgenp; \weight) &\geq \frac{2}{\|\out\|_2} \sum_{i=1}^m |\covpuc_i^\top \genp| \sqrt{ \covpuc_i^\top \weight \covpuc_i} \label{eq:ineq1} \\
				    &= \frac{2}{\|\out\|_2} \sum_{i=1}^m \| \covpuc_i \covpuc_i^\top \genp \|_{\weight} \nonumber \\
				    &\geq \frac{2}{\|\out\|_2} \left\| \sum_{i=1}^m \covpuc_i \covpuc_i^\top \genp \right\|_{\weight} = \frac{2}{\|\out\|_2} \| \genp \|_\weight \label{eq:ineq2}
\end{align}
where the second inequality follows from the triangle inequality, and the last equality follows from 
\[
	\sum_{i=1}^m \covpuc_i \covpuc_i^\top \genp = \covpu \covpu^\top \genp = \covpu \covpu^\dagger \genp = \genp
\]
since $\genp \in \range(\covgenp) = \range(\covpu)$.

Assuming that $\weight \genp \neq \0$, we can achieve this lower bound by using
\[
	\what{\covgenp} = \frac{\|\out\|_2}{\| \genp\|_\weight} \genp \genp^\top,
\]
which clearly satisfy $\genp \in \range(\what{\covgenp})$. To see this, note that for $\genp \neq \0$ and $\alpha \neq 0$
\[
 (\alpha \genp \genp^\top)^\dagger = \frac{ \genp \genp^\top}{ \alpha \| \genp\|_2^4}.
\]
If $\weight \genp = \0$ then the lower bound just states $f(\genp, \covgenp, \weight) \geq 0$. For $\genp = \0$ this lower bound can be achieved by setting $\covgenp = \0$. However, if $\genp \neq \0$, then the constraint $\genp \in \range(\covgenp)$ ensures that 
\[
	\| \genp \|_{\covgenp^\dagger}^2 > 0.
\]
But by choosing $\what{\covgenp}(\lambda) = \lambda \genp \genp^\top$, we get 
\[
	f(\genp, \what{\covgenp}(\lambda), \weight) = \frac{1}{\lambda} \rightarrow 0
\]
as $\lambda \rightarrow \infty$. Hence the lower bound can be reached in the limit also for these $\genp$, so for all $\genp$, 
\[
	h(\genp; \weight; \covgenpset) = \frac{2}{\|\out\|_2} \|\genp\|_{\weight}.
\]
But in the special case that $\weight \genp = \0$ but $\genp \neq \0$, the infimum in \eqref{eq:G} cannot be attained.

\bibliographystyle{ieeetr}
\bibliography{refs_spicenote}

\begin{thebibliography}{10}

\bibitem{hoerl1970ridge}
A.~E. Hoerl and R.~W. Kennard, ``Ridge regression: Biased estimation for
  nonorthogonal problems,'' {\em Technometrics}, vol.~12, no.~1, pp.~55--67,
  1970.

\bibitem{tibshirani1996regression}
R.~Tibshirani, ``Regression shrinkage and selection via the lasso,'' {\em
  Journal of the Royal Statistical Society: Series B (Methodological)},
  vol.~58, no.~1, pp.~267--288, 1996.

\bibitem{wang2006regularized}
L.~Wang, M.~D. Gordon, and J.~Zhu, ``Regularized least absolute deviations
  regression and an efficient algorithm for parameter tuning,'' in {\em Sixth
  International Conference on Data Mining (ICDM'06)}, pp.~690--700, IEEE, 2006.

\bibitem{stoica2010new}
P.~Stoica, P.~Babu, and J.~Li, ``New method of sparse parameter estimation in
  separable models and its use for spectral analysis of irregularly sampled
  data,'' {\em IEEE Transactions on Signal Processing}, vol.~59, no.~1,
  pp.~35--47, 2010.

\bibitem{babu2014connection}
P.~Babu and P.~Stoica, ``Connection between spice and square-root lasso for
  sparse parameter estimation,'' {\em Signal Processing}, vol.~95, pp.~10--14,
  2014.

\bibitem{stoica2014weighted}
P.~Stoica, D.~Zachariah, and J.~Li, ``Weighted {SPICE}: A unifying approach for
  hyperparameter-free sparse estimation,'' {\em Digital Signal Processing},
  vol.~33, pp.~1--12, 2014.

\bibitem{zachariah2015online}
D.~Zachariah and P.~Stoica, ``Online hyperparameter-free sparse estimation
  method,'' {\em IEEE Transactions on Signal Processing}, vol.~63, no.~13,
  pp.~3348--3359, 2015.

\bibitem{Rao1973_linear}
C.~Rao, {\em Linear Statistical Inference and its Applications}.
\newblock Wiley Series in Probability and Statistics, Wiley, 2009 [1973].

\bibitem{SoderstromStoica1988_system}
T.~S\"oderstr\"om and P.~Stoica, {\em System identification}.
\newblock Prentice-Hall, Inc., 1988.

\bibitem{Kay1993_sspestimation}
S.~Kay, {\em Fundamentals of Statistical Signal Processing, Vol.1---Estimation
  theory}.
\newblock Prentice Hall, 1993.

\bibitem{kailath2000linear}
T.~Kailath, A.~H. Sayed, and B.~Hassibi, {\em Linear estimation}.
\newblock Prentice Hall, 2000.

\bibitem{gruber2017improving}
M.~Gruber, {\em Improving Efficiency by Shrinkage: The James--Stein and Ridge
  Regression Estimators}.
\newblock Routledge, 2017.

\bibitem{BelloniEtAl2011_squarerootlasso}
A.~Belloni, V.~Chernozhukov, and L.~Wang, ``Square-root lasso: pivotal recovery
  of sparse signals via conic programming,'' {\em Biometrika}, vol.~98, no.~4,
  pp.~791--806, 2011.

\bibitem{wang2013l1}
L.~Wang, ``The l1 penalized lad estimator for high dimensional linear
  regression,'' {\em Journal of Multivariate Analysis}, vol.~120, pp.~135--151,
  2013.

\bibitem{zellner1986assessing}
A.~Zellner, ``On assessing prior distributions and bayesian regression analysis
  with g-prior distributions,'' {\em Bayesian inference and decision
  techniques}, 1986.

\bibitem{searle1989blue}
S.~R. Searle and F.~Pukelsheim, ``On least squares and best linear unbiased
  estimation,'' tech. rep., Cornell University, BU-997-M, 1989.

\end{thebibliography}

\end{document}